\newcommand{\MD}{MD}
\newcommand{\rrvert}{\vert}
\newcommand{\llvert}{\vert}
\renewcommand{\mid}{|}
\newcommand{\R}{\mathbb{R}}
\newcommand{\F}{\mathcal{F}}
\newcommand{\N}{\mathbb{N}}
\newcommand{\G}{\mathcal{G}}
\newcommand{\E}{\mathbb{E}}
\newcommand{\1}{\mathbf{1}}
\newcommand{\Q}{\mathbb{Q}}
\newcommand{\X}{\mathcal{X}}
\newcommand{\Y}{\mathcal{Y}}
\newcommand{\B}{\mathcal{B}}
\newcommand{\dd}{\mathrm{d}}
\newcommand{\ba}{\mathrm{ba}}
\newtheorem{teo}{Theorem}[section]
\newtheorem{lem}[teo]{Lemma}
\newtheorem{cor}[teo]{Corollary}
\newtheorem{prop}[teo]{Proposition}
\begin{document}
\begin{frontmatter}

\title{Supermartingales as Radon--Nikodym densities and related
measure extensions}
\runtitle{Supermartingales as Radon--Nikodym densities}

\begin{aug}
\author[A]{\fnms{Nicolas}~\snm{Perkowski}\thanksref{T1}\ead[label=e1]{perkowski@ceremade.dauphine.fr}}
\and
\author[B]{\fnms{Johannes}~\snm{Ruf}\corref{}\thanksref{T2}\ead[label=e2]{j.ruf@ucl.ac.uk}}
\runauthor{N. Perkowski and J. Ruf}
\affiliation{Universit\'e Paris Dauphine and University College London}
\address[A]{CEREMADE---UMR CNRS 7534\\
Universit\'e Paris Dauphine\\
Place du Mar\'echal De Lattre De Tassigny\\
75775 Paris Cedex 16\\
France\\
\printead{e1}}
\address[B]{Department of Mathematics\\
University College London\\
Gower Street\\
London WC1E 6BT\\
United Kingdom\\
\printead{e2}}
\end{aug}
\thankstext{T1}{Supported by the Fondation Sciences Math\'ematiques de
Paris (FSMP) and by
a public grant overseen by the French National Research Agency (ANR) as
part of the ``Investissements d'Avenir''
program (reference: ANR-10-LABX-0098), and acknowledges generous
support from Humboldt-Universit\"at zu Berlin, where a major part of
this work was completed.}
\thankstext{T2}{Supported from the Oxford-Man Institute of
Quantitative Finance at the University of Oxford, where a major part of
this work was completed.}

\received{\smonth{9} \syear{2013}}
\revised{\smonth{7} \syear{2014}}

%
\begin{abstract}
Certain countably and finitely additive measures can be associated to a
given nonnegative supermartingale.
Under weak assumptions on the underlying probability space, existence
and (non)unique\-ness results for such measures are proven.
\end{abstract}

%
\begin{keyword}[class=AMS]
\kwd{60A10}
\kwd{60G44}
\kwd{60H99}
\end{keyword}
\begin{keyword}
\kwd{Change of measure}
\kwd{finitely additive measure}
\kwd{F\"ollmer measure}
\kwd{supermartingale}
\kwd{Fatou convergence}
\kwd{Caratheodory}
\kwd{Radon--Nikodym}
\end{keyword}
\end{frontmatter}

\section{Introduction}\label{sec1}

It is a simple but very useful observation that a probability measure
$Q$ which is not absolutely continuous with respect to some reference
measure $P$
has a nonnegative $P$-supermartingale as its ``Radon--Nikodym
derivative.'' For instance, such supermartingales appear naturally in
the generalization of Girsanov's theorem to measures without absolute
continuity relation as in~\citet{Yoeurp1985}, or when working
with killed diffusions.

Conversely, given a nonnegative supermartingale, under suitable
assumptions on the probability space, it is possible to reconstruct a
measure associated to it, the so-called F\"ollmer measure. The behavior
of the F\"ollmer measure characterizes the most important properties of
the supermartingale; see \citeauthor{F1972} (\citeyear{F1972,F1973}); see also
\citeauthor{RufNovikov} (\citeyear{RufNovikov,Rufmartingale}), \citet{Cuidissertation}
and \citet{RufLarsson} for applications in the detection of
strict local martingales. Further applications include, among others,
potential theory [\citet{F1972,Airault1974}], simple proofs of
the main semimartingale decomposition theorems [\citet{F1973}],
filtration enlargements [\citet{Yoeurp1985,Kardarastimes}],
filtration shrinkage
[\citet{FoellmerProtter2010,Larsson2013}] and a simple approach
to the study of conditioned measures
[\citet{PerkowskiRuf}].

Measures associated to nonnegative supermartingales have also appeared
naturally in the duality approach to stochastic control. The dual
formulation has been developed for several important applications, such
as utility maximization [see, among many others, \citet
{KLSX,KramSchach1999,FollmerGundel}] or super-replication of contingent
claims [see, e.g., \citet{Jacka,Rufots}]. In many situations, the
dual variables represent nonnegative supermartingales. As observed by
\citet{KLSX} [see also \citet{KramSchach1999}], only in
very special situations do those supermartingales turn out to be
martingales, in which case computations of the dual problem can be
simplified via standard changes of measure.

In order to tackle the general case,
more general changes of measure have been suggested. There are mainly
two approaches, one relying on powerful arguments in functional
analysis, and the other one relying on deep probabilistic insights. The
functional analytic arguments identify supermartingales with the
elements of the dual space of bounded measurable functions, the space
of finitely additive measures; see, for example, \citet{CSW2001}
or \citet{Karatzas2003}, but also \citet{LarZit2013}. The
probabilistic approach, on the contrary, relies on certain canonical
assumptions on the underlying probability space but allows the
identification of supermartingales with countably additive probability
measures; see, for example, \citet{FollmerGundel}.


Recently, there has been an increased interest in economically
meaningful asset price models 
in which local martingales and supermartingales and their
interpretation as putative changes of measure appear naturally.
For example, there are models that allow for certain arbitrage
opportunities but have associated to them a class of dual
supermartingales [\citet{Pl2006,PH,KK,RufRunggaldier}]. The dual
supermartingales then correspond either to weakly equivalent finitely
additive local martingale measures [\citet{Kardarasfinitely}] or
to dominating local martingale measures, which turn out to be the
appropriate pricing operators in this context [\citet
{FK,Rufhedging,ImkellerPerkowski}]. Furthermore, underlying asset
prices have been modeled as strict local martingales under a pricing
measure and the corresponding associated measures have been constructed
in order to model certain phenomena, such as bubbles [\citet
{PP,KKNlocal}] or explosive exchange rates [\citeauthor{CFR2011} (\citeyear{CFR2011,CFRqnv})], and in order to compute actual quantities in such
models. Short-selling constraints lead directly to models in which
asset prices follow supermartingale dynamics [\citet{Pulido2013}]
and changes of num\'eraires in such models correspond to
supermartingales as Radon--Nikodym derivatives.

It is thus of great interest to construct the measure associated to a
given supermartingale $Z$. There are several different constructions
that all require different assumptions, and some of which only work on
an extended probability space:
\begin{itemize}
\item For a general filtered probability space $(\Omega,\F, (\F
_t)_{t \ge0}, P)$ with a nonnegative supermartingale $Z$, it is
possible to construct a finitely additive measure on $(\Omega\times
[0,\infty],\mathscr{A})$, where $\mathscr{A} \subset\mathscr{P}$
is a suitable algebra, and where $\mathscr{P}$ denotes the predictable
sigma algebra; see \citet{Metivier1975}. Without further
assumptions on $(\Omega, \F, (\F_t)_{t \ge0})$, this measure can be
extended to a countably additive measure on $(\Omega\times[0,\infty
], \mathscr{P})$ if and only if $Z$ is of class ($D$), in which case
one obtains the measure of \citet{Doleans1968}; see also \citet{M}.
\item Under certain topological assumptions on the filtered probability
space $(\Omega, \F, (\F_t)_{t \ge0}, P)$, it is possible to
construct the F\"ollmer measure on the enlarged space $(\Omega\times
[0,\infty],\mathscr{P})$.
We refer to \citet{F1972}, \citet{M} and \citet
{Stricker1975} for three different constructions. Under appropriate
conditions, it is also possible to construct the F\"ollmer measure on
$(\Omega, \F_{\zeta-})$, where $\zeta$ is a certain stopping time,
and not on an enlarged space~[\citet{Moy1953,M,Azema1976,DSBessel}].

\item Taking a different approach, if $Z$ is the pointwise limit of a
family of uniformly integrable martingales, then there exists a
finitely additive measure associated to it [\citet
{CSW2001,Karatzas2003}]. However, so far it seems to be not very well
understood under which conditions the supermartingale $Z$ is such a
pointwise limit of martingales.
\end{itemize}

In this work, we contrast the last two approaches of associating
countably and finitely additive measures to supermartingales. In the
countably additive case, we prove the existence of a probability
measure such that a given supermartingale $Z$ can be interpreted as
Radon--Nikodym density of this measure. In particular, we show that
this probability measure can already be constructed on the canonical
space itself and not only on the product space, even if the
supermartingale $Z$ is not a local martingale. Moreover, we provide
precise necessary and sufficient conditions for the uniqueness of such
a probability measure associated to the supermartingale $Z$.

In the finitely additive case, we show the existence of a finitely
additive measure associated to the supermartingale $Z$, as long as the
underlying filtered probability space is sufficiently rich, that is, as
long as it supports a Brownian motion. Furthermore, we show that such a
finitely additive measure is never unique. The argument for the
existence of such finitely additive measures also yields the existence
of uniformly integrable martingales that Fatou converge to a given
supermartingale.

\subsection*{Structure of the paper}
The paper is structured as follows: we conclude the \hyperref[sec1]{Introduction} by a
short overview of the notation used in the following. Section~\ref{SFollmerMeasure} introduces the notion of the F\"ollmer measure
associated to a supermartingale. Section~\ref{Sresults} contains the
main results concerning existence and (non)uniqueness of F\"ollmer
measures. Sections~\ref{sfoellmeronpathspace} and \ref
{sfinitelyadditive} then consist of the proofs of those results and of
some pedagogical examples.

Appendix~\ref{Afiltration} reviews modifications of processes if the
filtration is not augmented by null sets.
Appendix~\ref{Amultiplicative} recalls results concerning the
multiplicative decomposition of a supermartingale, which will be used
in the proofs of Sections~\ref{sfoellmeronpathspace} and \ref
{sfinitelyadditive}.
Appendix~\ref{Ameasure} provides a collection of definitions and
results concerning relevant measure-theoretic spaces. Appendix~\ref
{Aextension} discusses results concerning the extension of measures and
Appendix~\ref{Apathspace} lists important properties of the canonical
path space.

Appendix~\ref{AFatou} shows how the approximation techniques used to
prove the existence of the F\"ollmer finitely additive measure can be
modified to show that any supermartingale can be approximated, in the
sense of Fatou, by uniformly integrable martingales, provided the
underlying probability space supports a Brownian motion.
Appendix~\ref{AE} extends the discussion of nonuniqueness of the F\"
ollmer finitely additive measure and illustrates that the uniqueness
claim of the Carath\'eodory extension theorem does not hold among the
class of finitely additive measures. Finally, Appendix~\ref
{Aalternative} provides an alternative and insightful proof of one
lemma concerning the nonuniqueness of the F\"ollmer finitely additive measure.

\subsection*{Basic notation}
We shall use the convention that $\inf\varnothing= \infty$ and $\infty
\times\1_A(\omega) = 0$ for all $\omega\notin A$, where $A$ denotes
some event.
Expectations under a probability measure $R$ are denoted by $\E
_R[\cdot]$. Equalities between processes are to be \mbox{understood} up to
indistinguishability, and statements such as ``$G$ is a c\`adl\`ag
process'' or ``$G$ is nonnegative'' mean that these properties hold
almost surely---unless explicitly stated otherwise. We shall assume
that all considered semimartingales are (almost surely) c\`adl\`ag. If
$G = (G_t)_{t \geq0}$ is a l\`ag process, then we denote by $G_- =
(G_{t-})_{t \geq0}$ its left limit process, that is, $G_{0-} = G_0$
and $G_{t-} = \limsup_{s \uparrow t} G_s \mathbf{1}_{\limsup_{s
\uparrow t} G_s< \infty} + 0\times\mathbf{1}_{\limsup_{s \uparrow
t} G_s = \infty}$ for\vspace*{1pt} all $t > 0$, and by $\Delta G = G - G_-$ the
jump process of the process $G$. Similarly, if $G$ is a l\`ad process,
we denote by $G_+ = (G_{t+})_{t \geq0}$ its right limit process.

Throughout this paper, we are given a filtered probability space
$(\Omega, \F, \break (\F_t)_{t \geq0}, P)$, where $\Omega$ denotes a
nonempty set, $\F$ a sigma algebra on $\Omega$, $(\F_t)_{t \geq0}$
a right-continuous filtration with $\F_t \subseteq\F$ for all $t \ge
0$, and
$P$ a probability measure. We set $\F_\infty= \bigvee_{t \geq0} \F
_t \subset\F$. Moreover, we are given a nonnegative, right-continuous
$P$-supermartingale $Z = (Z_t)_{t \geq0}$ with $E_P[Z_0] = 1$. We
stress the fact that $(\F_t)_{t \geq0}$ will not always be complete
with respect to $P$; see Appendix~\ref{Afiltration} for a discussion
of this critical point.
We shall always silently assume that the notions of martingales,
supermartingales, etc., hold with respect to the filtration $(\F_t)_{t
\geq0}$.

\section{F\"ollmer measures associated to a supermartingale}\label{sec2} \label{SFollmerMeasure}

We like to think of the $P$-supermartingale $Z$ as the
``Radon--Nikodym density'' of a probability measure $Q$ that is not
necessarily absolutely continuous with respect to $P$; also $P$ is not
necessarily absolutely continuous with respect to $Q$. Our aim is to
recover the measure $Q$.

On a general probability space, for example, one with a completed
filtration, such a probability measure $Q$ does not always exist on
$(\Omega, \F)$---except if $Z$ is a uniformly integrable
martingale. However, as we will show, if $(\Omega, \F, (\F_t)_{t
\geq0})$ is the space of (possibly explosive) right-continuous paths
with left limits along with the canonical filtration, such a
probability measure always exists.
Moreover, without such a canonical assumption, but under the assumption
that the filtered probability space $(\Omega, \F, (\F_t)_{t \geq0},
P)$ supports some Brownian motion, we shall see that it is still
possible to associate a finitely additive measure $Q$ to the
\mbox{$P$-}supermartingale $Z$.

In the following, we make precise the meaning of ``measures associated
to supermartingales.''

\subsection{F\"ollmer countably additive measures}\label{sec2.1}

Here, we explain in which way a supermartingale can be interpreted as
the Radon--Nikodym derivative of a countably additive probability measure.
We begin with the following definition.

\begin{defn}
If $Q$ and $\tau$ are a probability measure and a stopping time on
$(\Omega, \F, (\F_t)_{t\ge0})$, then $(Q,\tau)$ is called a \emph{F\"ollmer pair} for $Z$ if
%
\begin{eqnarray}\label{ekunita-yoeurp}
P[\tau= \infty] &=& 1\quad\mbox{and}\nonumber
\\
Q\bigl[A \cap\{\rho< \tau\}\bigr] &=& \E_P[
Z_\rho\1_A ]
\\
\eqntext{\mbox{for all $A \in\F_\rho$
and finite stopping times $\rho$}.}
\end{eqnarray}
In that case, we also call $Q$ a \emph{F\"ollmer} (\emph{countably additive})
\emph{measure} for $(Z,\tau)$, or, slightly abusing notation, a \emph{F\"
ollmer} (\emph{countably additive}) \emph{measure} for $Z$.
\end{defn}

Note that every F\"ollmer measure $Q$ is defined on $(\Omega,\F)$,
despite the fact that~(\ref{ekunita-yoeurp}) only involves the
restriction of $Q$ to $\F_\infty\subset\F$. If $(Q, \tau)$ is a F\"
ollmer pair for the \mbox{$P$-}supermartingale $Z$ then the pair $(Z, \tau)$
is called the \emph{Kunita--Yoeurp decomposition} of $Q$ with respect
to $P$. In that case, the two measures $Q\mid _{\F_t}[\cdot\cap\{t \ge
\tau\}]$ and $P\mid _{\F_t}[\cdot]$ are singular for each $t \geq0$ as
the second one has full mass on the event $\{\tau= \infty\}$ while
the first one assigns measure zero to it. Hence, the
Kunita--Yoeurp decomposition can be interpreted as a progressive
Lebesgue decomposition on filtered probability spaces. The
decomposition was introduced by \citet{Kunita1976} for Markov
processes. The general formulation is due to \citet{Yoeurp1985},
who used it to prove a generalized Girsanov theorem for probability
measures that are not necessarily absolutely continuous with respect to
each other.

Given two probability measures $P$ and $Q$, where $Q$ has the
Kunita--Yoeurp decomposition $(Z,\tau)$ with respect to $P$, the
stopping time $\tau$ is uniquely determined up to a $Q$-null set, and
the $P$-supermartingale $Z$ is uniquely determined up to a
$P$-evanescent set; see Proposition~2 in \citet{Yoeurp1985}. As
Theorem~\ref{tfoellmercountablesummary} below yields, it might well be
possible, however, to associate two different F\"ollmer countably
additive measures to a given $P$-supermartingale $Z$.

%
\begin{defn}
We say that the F\"ollmer pair for $Z$ is \emph{unique} if given two
probability measures $Q$ and $\widetilde{Q}$ and two stopping times
$\tau$ and $\widetilde{\tau}$ such that $(Q,\tau)$ and $(\widetilde
{Q}, \widetilde{\tau})$ both satisfy (\ref{ekunita-yoeurp}), we have
$Q = \widetilde{Q}$ (and $Q[\tau= \widetilde{\tau}] = 1$).

If $\tau$ is a stopping time, then we say that the F\"ollmer
(countably additive) measure for $(Z,\tau)$ is \emph{unique} if,\vspace*{1pt}
given two probability measures $Q$ and $\widetilde{Q}$ such that
$(Q,\tau)$ and $(\widetilde{Q}, \tau)$ both satisfy (\ref
{ekunita-yoeurp}), we have $Q = \widetilde{Q}$.
\end{defn}

In order to verify whether a given probability measure is a F\"ollmer
countably additive measure for the $P$-supermartingale $Z$, it
suffices to verify (\ref{ekunita-yoeurp}) for deterministic times.

%
\begin{prop} \label{PP12}
If $Q$ and $\tau$ are a probability measure and a stopping time on
$(\Omega, \F, (\F_t)_{t\ge0})$ such that $
P[\tau= \infty] = 1$ and $Q[A \cap\{t < \tau\}] = \E_P[ Z_t \1_A]$
for all $t \ge0$ and all $A \in\F_t$,
then $(Q,\tau)$ is a \emph{F\"ollmer pair} for $Z$.
Moreover,
%
\begin{equation}
\label{ekunita-yoeurprv} \E_{Q}[G \1_{\{\rho< \tau\}}] = \E_P[Z_\rho
G \1_{\{\rho<\infty\}
\cap\{Z_\rho>0\}}]
\end{equation}
holds for all $[0,\infty]$-valued, $\F_\rho$-measurable random
variables $G$ and for all stopping times $\rho$; in particular, $
{Q}[\rho< \tau] = \E_P[Z_\rho]$ for all finite stopping times $\rho$.
\end{prop}

\begin{pf}
Using the linearity of the expectation operator and the monotone
convergence theorem, it is sufficient to show (\ref{ekunita-yoeurprv})
for a fixed stopping time $\rho$, with $G = \1_A$ for an arbitrary $A
\in\F_\rho$. If $\rho$ takes only countably many values, then this
identity follows directly. For the general case, consider the
nonincreasing sequence $(\rho_n)_{n \in\N}$ of stopping times where
$\rho_n = \inf\{k 2^{-n}\dvtx  k 2^{-n} \geq\rho, k \in\N\}$. Then
(\ref{ekunita-yoeurprv}) holds with $G$ replaced by $\1_A$ and with
$\rho$ replaced by $\rho_n$ for each $n \in\N$. Finally, by taking
limits on both sides and using the fact that the nonnegative,
discrete-time backward $P$-supermartingale $(Z_{\rho_n})_{n \in\N}$
is uniformly integrable, we may conclude.
\end{pf}

%
\begin{cor} \label{C24}
If $(Q, \tau)$ is a F\"ollmer pair for $Z$ then the following two
statements hold:
\begin{itemize}
\item$P\llvert  _{\F_{\infty}} \ll Q\rrvert  _{\F_{\infty}}$ if and only if
$P[\lim_{t \uparrow\infty} Z_t > 0] = 1$;
\item$Q\llvert  _{\F_{\infty}} \ll P\rrvert  _{\F_{\infty}}$ if and only if $\E
_P[\lim_{t \uparrow\infty} Z_t] = 1$.
\end{itemize}
\end{cor}

\begin{pf}
Let $A = \{\lim_{t \uparrow\infty} Z_t = 0\} \cap\{\tau= \infty\}
$ and note that $Q[A] = 0$. This and the fact that $P[\tau= \infty
]=1$ yield the first ``only if'' implication. For the reverse
direction, let $B \in\bigcup_{t \ge0} F_t$ and observe that
\[
Q[B] \ge\lim_{t \uparrow\infty} Q\bigl[B \cap\{ \tau> t\}\bigr] = \lim
_{t
\uparrow\infty} \E_P[1_B Z_t] \ge
\E_P \Bigl[1_B \lim_{t \uparrow
\infty}
Z_t \Bigr].
\]
By the monotone class theorem, this extends to all $B \in\F_\infty$. Since
\[
P \Bigl[\lim_{t \uparrow\infty}Z_t > 0 \Bigr] = 1
\]
by assumption, we deduce that $P\llvert  _{\F_{\infty}} \ll Q\rrvert  _{\F_{\infty
}}$. The second equivalence follows from Proposition~III.3.5
\citet{JacodS}.
\end{pf}

The following observation describes the dynamics of the
$P$-supermartingale $Z$ under the F\"ollmer measure.

\begin{prop}
If $(Q, \tau)$ is a F\"ollmer pair for $Z$ then the process $Y =
(Y_t)_{t \geq0}$, given by $Y_t = 1/Z_t \1_{\{t < \tau\}}$, is a
(nonnegative) $Q$-supermartingale. Moreover, the following two
statements hold:
\begin{itemize}
\item$Y$ is a $Q$-local martingale if and only if $Z$ does not jump
to zero under $P$;
\item$Y$ is a $Q$-martingale if and only if $P[Z_t > 0] = 1$ for all
$t \geq0$.
\end{itemize}
\end{prop}

\begin{pf}
The statement follows from (\ref{ekunita-yoeurprv}) and a version of
Bayes' rule; for details, see the proof of Theorem~2.1 in \citet{CFR2011}.
\end{pf}

\subsection{F\"ollmer finitely additive measures}\label{sec2.2}

Recall that $\ba(\Omega, \F)$ is the space of bounded, finitely
additive set functions on $\F$ that take their values in $\R$. An
element $Q \in\ba(\Omega,\F)$ is called a \emph{finitely additive
probability measure} if it is nonnegative and satisfies $Q[\Omega] =
1$. In that case, $Q$ can be uniquely decomposed into a regular part
$Q^r \ge0$ and a singular part $Q^s \ge0$; see Theorem~III.7.8 in
\citet{DunfordSchwartzI}. Here, $Q^r$ is a sigma-additive measure
on $(\Omega, \F)$, and $Q^s$ is purely finitely additive, that is,
any sigma-additive measure $\mu$ on $\F$ which satisfies $0 \le\mu
\le Q^s$ is constantly 0.

If $Q$ and $R$ are two finitely additive measures, then $Q$ is said to
be \emph{weakly absolutely continuous} with respect to $R$ if for all
$A \in\F$ we have that $R[A] = 0$ implies $Q[A] = 0$; see
Remark~6.1.2 in \citet{Rao1983}. We shall write $\ba(\Omega, \F, P)$ for the space of all finitely additive measures on $\F$ that are
weakly absolutely continuous with respect to $P$; we write $\ba
_1(\Omega, \F, P)$ for all nonnegative elements of $\ba(\Omega,\F,P)$ that have total mass one.

%
\begin{defn} \label{DFAFM}
A weakly absolutely continuous, finitely additive probability measure
$Q \in\ba_1(\Omega, \F,P)$, such that
%
\begin{equation}
\label{eqQSing} \qquad (Q\mid _{\F_\rho})^r[A] = \E_P[Z_\rho
\1_A]\qquad\mbox{for all $A \in \F_\rho$ and finite
stopping times $\rho$},
\end{equation}
is called \emph{F\"ollmer finitely additive measure} for $Z$.
\end{defn}

Recall that the dual space $L^\infty(\Omega, \F, P)^*$ of $L^\infty
= L^\infty(\Omega, \F, P)$ can be identified with the elements of
$\ba(\Omega, \F,P)$; see Theorem~IV.8.16 in \citet
{DunfordSchwartzI}. This is the reason why finitely additive
probability measures naturally appear in the dual approach to
stochastic optimization problems. For further details, see also
\citet{CSW2001} and \citet{Karatzas2003}.

In Example~\ref{excounterexampledeterministictimesimplystoppingtimesfinitelyadditive}
below, we construct a $P$-supermartingale $Z$ and two finitely
additive probability measures $Q_1, Q_2 \in\ba_1(\Omega, \F,P)$
that satisfy\break  $(Q_1\mid _{\F_t})^r = (Q_2\mid _{\F_t})^r$ for all $t \ge0$.
Moreover, $Q_1$ satisfies (\ref{eqQSing}), and there exists a finite
stopping time $\rho$ such that
\[
(Q_1\mid _{\F_\rho})^r = 0 \neq P\mid
_{\F_\rho} = (Q_2\mid_{\F_\rho})^r.
\]
Therefore, there is no result corresponding to Proposition~\ref{PP12}
in the finitely additive case. To wit, if a finitely additive measure
$Q$ satisfies (\ref{eqQSing}) for deterministic times, then this does
not automatically imply that $Q$ satisfies (\ref{eqQSing}).

\subsection{Comparison of F\"ollmer countably and finitely additive measures}\label{sec2.3}

We have introduced two different notions of F\"ollmer measures, and it
is natural to ask how these two concepts are related. As it turns out,
in most situations they are mutually exclusive, despite their apparent
similarity.

%
\begin{prop}
If $Z$ is a uniformly integrable $P$-martingale and if \mbox{$\F=\F_\infty
$}, then each F\"ollmer countably additive measure for $Z$ is a F\"
ollmer finitely additive measure for $Z$.
If $Z$ is not a uniformly integrable $P$-mar\-tin\-gale, then the sets
of F\"ollmer countably additive measures for $Z$ and of F\"ollmer
finitely additive measures for $Z$ are disjoint.
\end{prop}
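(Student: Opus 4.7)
The plan is to verify both statements directly. For the first part, the key idea is that any F\"ollmer pair $(Q,\tau)$ must in fact have $Q = Z_\infty \cdot P$ on $\F$, after which the finitely additive condition follows immediately, since a countably additive measure equals its own regular part. Concretely, fix a F\"ollmer countably additive measure $Q$ for $Z$ with pair $(Q,\tau)$. Since $Z$ is a uniformly integrable $P$--martingale, $\E_P[Z_\rho]=1$ for every finite stopping time $\rho$ by optional sampling, so taking $A=\Omega$ in \eqref{e:kunita-yoeurp} yields $Q[\rho<\tau]=1$; letting $\rho=n\to\infty$ gives $Q[\tau=\infty]=1$. Since then $\{\tau=\infty\}\subset\{\rho<\tau\}$ for any finite $\rho$, we deduce $Q[A]=\E_P[Z_\rho\1_A]$ for all $A\in\F_\rho$; in particular, $Q|_{\F_t}=Z_t\cdot P|_{\F_t}$ for each $t\ge 0$. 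Using $Z_t=\E_P[Z_\infty\,|\,\F_t]$, the measure $Q$ agrees with $Z_\infty\cdot P$ on the $\pi$-system $\bigcup_{t\ge 0}\F_t$, hence on $\F=\bigvee_{t\ge 0}\F_t$ by a Dynkin argument. It follows that $Q\ll P$ and, by countable additivity, $(Q|_{\F_\rho})^r=Q|_{\F_\rho}$; combined with the above this shows that $Q$ satisfies \eqref{eq:QSing} and is a F\"ollmer finitely additive measure.

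For the second part, suppose for contradiction that $Q$ is both a F\"ollmer countably additive and a F\"ollmer finitely additive measure for $Z$. Since $Q|_{\F_\rho}$ is countably additive, $(Q|_{\F_\rho})^r=Q|_{\F_\rho}$, so \eqref{eq:QSing} reads $Q[A]=\E_P[Z_\rho\1_A]$ for every $A\in\F_\rho$ and every finite stopping time $\rho$. Taking $A=\Omega$ yields $\E_P[Z_\rho]=1$, while applied with $\rho\in\{s,t\}$ for $s\le t$ and $A\in\F_s$ it gives $\E_P[Z_s\1_A]=\E_P[Z_t\1_A]$, so $Z$ is a $P$--martingale. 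Since $Q$ is weakly absolutely continuous and countably additive, $Q\ll P$, so a density $M=\dd Q/\dd P\in L^1(P)$ exists; for any $t\ge 0$ and $A\in\F_t$, $\E_P[M\1_A]=Q[A]=\E_P[Z_t\1_A]$, giving $Z_t=\E_P[M\,|\,\F_t]$. Hence $Z$ is a uniformly integrable $P$--martingale, contradicting the hypothesis.

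The main subtlety lies in the passage in the first part from equality on each $\F_t$ to equality on $\F=\bigvee_{t\ge 0}\F_t$; this is where the hypothesis on $\F$ enters and is precisely the reason the two notions of F\"ollmer measure coincide only under this additional assumption. Away from this point the arguments are routine, with countable additivity of $Q$ doing the heavy lifting in both directions: on one hand it collapses the regular part $(Q|_{\F_\rho})^r$ to $Q|_{\F_\rho}$, and on the other hand it promotes weak absolute continuity to genuine absolute continuity.
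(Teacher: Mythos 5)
Your proof is correct and follows essentially the same route as the paper's: both parts reduce to the equivalence between $Q\ll P$ on $\bigvee_{t\ge 0}\F_t$ and $Z$ being a uniformly integrable $P$--martingale. You merely make explicit what the paper dismisses as ``well known'' (the identification $Q=Z_\infty\cdot P$ via a Dynkin argument, and the Radon--Nikodym closure argument showing $Z_t=\E_P[\dd Q/\dd P\,|\,\F_t]$ in the converse direction), which in particular disposes of the case of a martingale that is not uniformly integrable without the paper's case split.
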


\begin{pf}
The statement follows from the second equivalence in Corollary~\ref{C24}.
\end{pf}

We shall see in Theorem~\ref{Tapprox} below and also in Appendix~\ref
{AE} that, in the case of a uniformly integrable martingale $Z$, the
class of F\"ollmer finitely additive measures is strictly larger than
the class of F\"ollmer countably additive measures, as long as the
probability space is sufficiently rich. However, in general, the
existence of a F\"ollmer countably additive measure does not imply the
existence of a F\"ollmer finitely additive measure, nor does the
opposite implication hold.

%
\begin{ex}
Assume that $Z$ is a $P$-local martingale which is not a uniformly
integrable $P$-martingale
and assume that the filtration $(\F_t)_{t \ge0}$ is augmented by all
$P$-null sets in $\F$. In Example~\ref{exlocalmartingalefinitelyadditivemeasure} below, we show that there
exists a F\"ollmer finitely additive measure for 
$Z$. However, since $\E_P[Z_0] = 1$ holds, any F\"ollmer countably
additive measure $Q$ for $Z$ is absolutely continuous with respect to
$P$ on the sigma algebra $\F_0$. Since $\F_0$ contains all $P$-null
sets, 
$Q$ is absolutely continuous with respect to 
$P$, which is only possible if 
$Z$ is a uniformly integrable $P$-martingale. Thus, there exists no F\"
ollmer countably additive measure for the $P$-local martingale $Z$.

This example illustrates why we shall assume an incomplete filtration
when constructing F\"ollmer countably additive measures below. If the
$P$-supermartingale $Z$ is a martingale, we could also assume a
filtration that is enlarged in a progressive manner; see \citet
{Bichteler2002} or \citet{Najnudel2011} for details. If $Z$ is a
local martingale, then it is still possible, by using a localization
sequence, to perform such a progressive enlargement; see \citet
{Kreher2013}; however, in that case the filtration depends on the local
martingale $Z$ itself. If $Z$ is only a $P$-supermartingale and not a
local martingale, then finding a progressive completion that would
allow us to construct F\"ollmer countably additive measures for $Z$
seems impossible. We continue this discussion on the issue of
completing the filtration in Remark~\ref{runiqueness} below.
\end{ex}

%
\begin{ex}
Assume that $\Omega= \{0,1\}$, $\F= \sigma(\{0\}) = \F_t$ for all
$t \ge1$, $\F_t = \{\varnothing, \Omega\}$ for all $t \in[0,1)$ and
$P[\{0\}]=1$. Moreover, assume that $Z$ satisfies $Z_t = \1_{t <1}$ for
all $t \geq0$.
Consider the probability measure $Q$ that satisfies $Q[\{1\}]=1$ and
the stopping time $\tau= 1+\infty\1_{\{0\}}$, which satisfies $P[\tau
=\infty] = 1 = Q[\tau= 1]$.
Then
$(Q,\tau)$ is a F\"ollmer pair for the $P$-supermartingale $Z$.
However, there is no F\"ollmer finitely additive probability measure
for the $P$-supermartingale $Z$ since $P$ is the only finitely
additive probability measure on $(\Omega,\F)$ which is weakly
absolutely continuous with respect to $P$.
\end{ex}

\section{Existence and (non)uniqueness}\label{sec3} \label{Sresults}
We next collect the main results of this paper concerning existence and
(non)uniqueness of F\"ollmer countably and finitely additive measures
associated to the nonnegative $P$-super-\break martingale $Z$.

\subsection{The countably additive case}\label{sec3.1}

In the countably additive case, we shall rely on a specific choice of a
canonical probability space. This motivates us to formulate the
following assumption (we recall the definition of several
measure-theoretic notions such as ``state space'' in Appendix~\ref{Ameasure}).

{\renewcommand{\theass}{($\mathcal{P}$)}
\begin{ass}\label{assP}
Let $E$ be a state space, and let $\Delta\notin E$ be a cemetery
state. For all $\omega\in(E\cup\{\Delta\})^{[0,\infty)}$ define
\[
\zeta(\omega) = \inf\bigl\{t \ge0\dvtx  \omega(t) = \Delta\bigr\}.
\]
Let $\Omega\subset(E\cup\{\Delta\})^{[0,\infty)}$ be the space of
paths $\omega\dvtx  [0,\infty) \rightarrow E \cup\{\Delta\}$, for which
$\omega$ is c\`adl\`ag on $[0, \zeta(\omega))$, and for which
$\omega(t) = \Delta$ for all $t \ge\zeta(\omega)$.
For all $t \ge0$ define $X_t(\omega) = \omega(t)$ and the sigma
algebras $\F^0_t = \sigma(X_s\dvtx  s \in[0,t])$ and $\F_t = \bigcap_{s
> t} \F^0_s$. Moreover, set $\F= \bigvee_{t \ge0} \F^0_t = \bigvee_{t \ge0} \F_t = \F_\infty$.
\end{ass}}%

Thus, under Assumption~\ref{assP}, the states of the world
$\omega\in\Omega$ are paths taking values in a state space up to a
certain time when they get absorbed in a cemetery state $\Delta$.
Before the time of absorption, those paths are assumed to be c\`adl\`
ag. The filtration $(\F_t)_{t \geq0}$ is the right-continuous
modification of the canonical filtration $(\F_t^0)_{t \geq0}$.

If $\rho$ is an $(\F_t)_{t \geq0}$-stopping time, then the sigma
algebra $\F_{\rho-}$ is defined as
%
\begin{equation}
\label{eFrho-definition} \F_{\rho-} = \F^0_0 \vee\sigma
\bigl( A \cap\{\rho> t\}\dvtx  A \in\F_t, t \ge0\bigr).
\end{equation}
For later use, note that $\rho$ is $\F_{\rho-}$-measurable.
This definition is slightly different from the usual one, where $\F
^0_0$ would be replaced by $\F_0$ in (\ref{eFrho-definition}). The
definition in (\ref{eFrho-definition}), taken from \citet
{F1972}, has the advantage that $\F_{\rho-}$ is countably generated,
as Lemma~\ref{lCanProps} will show, which collects several properties
of the probability space in Assumption~\ref{assP}.

We define the nondecreasing sequence $(\widehat{\tau}_n^Z)_{n \in\N
}$ of stopping times and the stopping time $\widehat{\tau}{}^Z$ by
%
\begin{equation}
\label{eqtauM} \widehat{\tau}_n^Z = \inf\{t \ge0\dvtx
Z_t \ge n\} \wedge n; \qquad \widehat{\tau}{}^Z = \lim
_{n \uparrow\infty} \widehat{\tau}_n^Z.
\end{equation}

Before we get to the (somewhat subtle) precise formulation of our
results, let us describe them informally. We show that, under
Assumption~\ref{assP}, it is possible to construct a F\"ollmer
countably additive measure for the $P$-supermartingale $Z$ on the
space $(\Omega,\F)$, as long as $\E_P[Z_\zeta\1_{\{\zeta<\infty\}
}] = 0$ or $Z$ is a local martingale. In particular, this is the case
if $P$ satisfies $P[\zeta< \infty] = 0$. Essentially, the F\"ollmer
countably additive measure of $Z$ is unique if $Z$ is a martingale (not
necessarily uniformly integrable), or if $Z$ is a local martingale
which explodes at time $\zeta$ and not before.

If the $P$-supermartingale $Z$ has a nontrivial part of finite
variation, then we have to artificially make $Q$ lose mass to obtain a
F\"ollmer countably additive measure for $Z$. Since we have a degree of
freedom here in choosing where to send the mass of $Q$, it is not
surprising that in this case we never have uniqueness---except
possibly if the state space $E$ is countable. Of course, if we fix the
stopping time $\tau$ in the F\"ollmer pair for the
$P$-supermartingale $Z$, and hereby implicitly specify where we send
the mass of $Q$, then it is also possible to have uniqueness if $Z$ is
a $P$-supermartingale. In particular, once the stopping time $\tau$
is fixed, $Q$ is always uniquely determined on $\F_{\tau-}$ and we
only have to study under which conditions there exists a unique
extension to $\F$.

%
\begin{teo}\label{tfoellmercountablesummary}
Under Assumption~\ref{assP}, suppose that one (or both) of the
following conditions hold:
\begin{itemize}
\item the $P$-supermartingale $Z$ is a $P$-local martingale;
\item the probability measure $P$ satisfies $\E_P[Z_\zeta\1_{\{\zeta
<\infty\}}] = 0$.
\end{itemize}
Then there exists a F\"ollmer pair $(Q,\tau)$ for $Z$. If the
$P$-supermartingale $Z$ is a $P$-local martingale, then we can take
$\widehat{\tau}{}^Z$, defined in (\ref{eqtauM}), as the stopping time;
to wit, in that case there exists a F\"ollmer countably additive
measure $\widehat{Q}{}^Z$ for $Z$ such that $(\widehat{Q}{}^Z, \widehat
{\tau}{}^Z)$ is a F\"ollmer pair.
Moreover, the following statements always hold:
\begin{longlist}[(III)]
\item[(I)] The following conditions are equivalent:
\begin{enumerate}[(a)]
\item[(a)] the set $\{\tau< \zeta\}$ is $Q\mid _{\F_{\tau-}}$-negligible;
\item[(b)] there is a unique F\"ollmer countably additive measure for
$(Z,\tau)$.
\end{enumerate}
\item[(II)] If $\widetilde{\tau}$ is a stopping time such that the
pair $(Q, \widetilde{\tau})$ also
satisfies~(\ref{ekunita-yoeurp}), then
$Q[\tau= \widetilde{\tau}] = 1$.
\item[(III)] The following statement in~\textup{(c)} always implies the one
in~\textup{(d)}. The reverse implication holds provided that the state space $E$
is uncountable.
\begin{enumerate}[(a)]
\item[(c)] The $P$-supermartingale $Z$ is a $P$-local martingale and
the set $\{\widehat{\tau}{}^Z < \zeta\}$ is $\widehat{Q}{}^Z\mid _{\F
_{\widehat{\tau}{}^Z -}}$-negligible;\vspace*{1pt}
\item[(d)] there is a unique F\"ollmer pair for $Z$.
\end{enumerate}
\end{longlist}
\end{teo}

%
\begin{rmk} \label{runiqueness}
\citet{Azema1976} also show the existence of a F\"ollmer countably
additive measure $Q$ for $(Z,\zeta)$ on $(\Omega, \F)$. Their
construction is quite different from the one presented below and does
not address the question of uniqueness: after fixing the stopping time
$\zeta$, there exists at most one probability measure $Q$ for which
$(Q,\zeta)$ is a F\"ollmer pair; see point~(I) in the previous theorem.

\citet{Azema1976} construct the countably additive F\"ollmer
measure directly on the universal completion $(\Omega, \F^u)$.
Indeed, if we construct $Q$ according to Theorem~\ref
{tfoellmercountablesummary} and then augment $\F_t$ with the
intersection of $P$- and $Q$-nullsets for all $t \geq0$ to obtain a
filtration $(\F^{P+Q}_t)_{t \geq0}$ and also a sigma algebra $\F
^{P+Q}$, the two probability measures $Q$ and $P$ can be uniquely
extended to $(\Omega, \F^{P+Q})$ by Lemma~\ref{lCanProps} and
Theorem~\ref{tlubinextension} in the Appendix~\ref{sec9}.
Thus, in particular, Theorem~\ref{tfoellmercountablesummary} also
yields the existence of a F\"ollmer countably additive measure on the
universally augmented space $(\Omega, \F^u)$. Note, however, that the
universally completed filtration $(\F^u_t)_{t \geq0}$ still misses
some of the nice properties of complete filtrations: for example, it is
not clear that supermartingales have identically c\`adl\`ag
modifications under $(\F^u_t)_{t\geq0}$.
\end{rmk}

The proof of the uniqueness statement in Theorem~\ref
{tfoellmercountablesummary} relies on the following observation.

%
\begin{lem}\label{lminimality}
Assume that $Z$ is a nonnegative $P$-local martingale. Then the F\"
ollmer pair $(\widehat{Q}{}^Z, \widehat{\tau}{}^Z)$ from Theorem~\ref
{tfoellmercountablesummary} is minimal in the following sense. If $(Q,
\tau)$ is another F\"ollmer pair then $Q\mid _{\F_{(\widehat{\tau}{}^Z
\vee\tau)-}}$ is uniquely determined by $Q\mid _{\F_{\tau-}}$, and
$Q[\widehat{\tau}{}^Z = \tau]=1$. In particular, we have ${Q}\mid _{\F
_{\widehat{\tau}{}^Z-}} = \widehat{Q}{}^Z\mid _{\F_{\widehat{\tau}{}^Z-}}$.
\end{lem}

Section~\ref{sfoellmeronpathspace} contains the proofs of Theorem~\ref
{tfoellmercountablesummary} and Lemma~\ref{lminimality}.

%
\begin{rmk}
The pair $(\widehat{Q}{}^Z, \widehat{\tau}{}^Z)$ of Lemma~\ref
{lminimality} is minimal in the sense of Lemma~\ref{lminimality}, but
usually not unique.
For example, consider the canonical probability space of
Assumption~\ref{assP} with $E = [0,\infty)$ equipped with two
measures $Q_1$ and $Q_2$ where $Q_1$ makes the canonical process a
Brownian motion stopped when hitting zero and $Q_2$ makes the canonical
process a Brownian motion killed when hitting zero; that is, if $\rho
_1$ denotes the first hitting time of zero by the canonical process,
then $Q_1[\rho_1<\infty] = 1 = Q_2[\zeta< \infty]$ and $Q_1[\zeta<
\infty] = 0 = Q_2[\rho_1 < \infty]$.

Now, if the canonical process is a three-dimensional Bessel process
under the probability measure $P$ and if $Z$ denotes its reciprocal,
then it is easily verified that both $(Q_1,
\rho_1)$ and $(Q_2, \zeta)$ satisfy (\ref{ekunita-yoeurp}).
However, those two pairs clearly do not agree. The minimal pair
$(\widehat{Q}{}^Z, \widehat{\tau}{}^Z)$ of Lemma~\ref{lminimality},
where $\widehat{Q}{}^Z$ is a-priori only defined on $\F_{\widehat{\tau
}{}^Z-}$, can be extended to $\F$ either by $Q_1$ or $Q_2$ (or other measures).
\end{rmk}

The following proposition provides an important sufficient criterion
for the uniqueness of the F\"ollmer countably additive measure in
Theorem~\ref{tfoellmercountablesummary}.

%
\begin{prop}
In the\vspace*{1pt} setup of Theorem~\ref{tfoellmercountablesummary}, if the
nonnegative \mbox{$P$-}\break super\-martingale $Z$ is a $P$-martingale, then
$\widehat{Q}{}^Z[\widehat{\tau}{}^Z= \infty] = 1$; in particular, then
the set $\{\widehat{\tau}{}^Z< \zeta\}$ is $\widehat{Q}{}^Z\mid _{\F
_{\widehat{\tau}{}^Z-}}$-negligible and there is a unique F\"ollmer
pair for $Z$.
\end{prop}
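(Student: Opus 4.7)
The plan is to apply the defining property \eqref{e:kunita-yoeurp} of the Föllmer pair $(\widehat{Q}^Z, \widehat{\tau}^Z)$ to a constant stopping time in order to show that $\widehat{\tau}^Z$ is infinite $\widehat{Q}^Z$-almost surely; the rest then follows by invoking Theorem~\ref{t:foellmer countable summary}.

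Concretely, I would take $\rho = n$ and $A = \Omega$ in \eqref{e:kunita-yoeurp}. Since the pair $(\widehat{Q}^Z, \widehat{\tau}^Z)$ produced by Theorem~\ref{t:foellmer countable summary} satisfies \eqref{e:kunita-yoeurp}, and since $Z$ is now assumed to be a genuine $P$-martingale with $\E_P[Z_0]=1$, this gives
\begin{align*}
\widehat{Q}^Z[n < \widehat{\tau}^Z] \;=\; \E_P[Z_n] \;=\; 1
\end{align*}
for every $n \in \N$. The events $\{n < \widehat{\tau}^Z\}$ decrease in $n$, with intersection $\{\widehat{\tau}^Z = \infty\}$, so continuity of measure from above yields $\widehat{Q}^Z[\widehat{\tau}^Z = \infty] = 1$. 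Since $\zeta \leq \infty$, the set $\{\widehat{\tau}^Z < \zeta\}$ is contained in $\{\widehat{\tau}^Z < \infty\}$, which belongs to $\F_{\widehat{\tau}^Z-}$ (as $\widehat{\tau}^Z$ is $\F_{\widehat{\tau}^Z-}$-measurable, see the remark after \eqref{e:F rho- definition}) and has $\widehat{Q}^Z$-measure zero. Hence $\{\widehat{\tau}^Z < \zeta\}$ is $\widehat{Q}^Z|_{\F_{\widehat{\tau}^Z-}}$-negligible.

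At this point the two uniqueness claims are immediate from Theorem~\ref{t:foellmer countable summary}. Condition (I)(a) is exactly what we just verified with $\tau = \widehat{\tau}^Z$, so (I)(b) gives uniqueness of the Föllmer countably additive measure for $(Z,\widehat{\tau}^Z)$; and since any martingale is a local martingale, condition (III)(c) holds as well, so (III)(d) yields uniqueness of the Föllmer pair for $Z$. There is essentially no obstacle beyond correctly identifying the one place where the assumption of a genuine martingale (as opposed to merely a local martingale) enters the argument, namely the identity $\E_P[Z_n]=1$ which forces $\widehat{Q}^Z[\widehat{\tau}^Z < \infty]=0$ and thereby triggers the general uniqueness machinery already established.
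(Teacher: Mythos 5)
Your proposal is correct and follows essentially the same route as the paper: the key step in both is the identity $\widehat{Q}^Z[\widehat{\tau}^Z > n] = \E_P[Z_n] = 1$ from \eqref{e:kunita-yoeurp}, followed by letting $n \uparrow \infty$ to conclude $\widehat{Q}^Z[\widehat{\tau}^Z = \infty] = 1$. You are merely more explicit than the paper about deducing the ``in particular'' clauses via the inclusion $\{\widehat{\tau}^Z < \zeta\} \subset \{\widehat{\tau}^Z < \infty\}$ and parts (I) and (III) of Theorem~\ref{t:foellmer countable summary}, which is a harmless elaboration.
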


\begin{pf}
If $Z$ is a $P$-martingale, then
\begin{eqnarray*}
\widehat{Q}{}^Z\bigl[\widehat{\tau}{}^Z < \infty\bigr] &=&
\lim_{n \uparrow
\infty} \widehat{Q}{}^Z \bigl[\widehat{
\tau}{}^Z \le n \bigr] = \lim_{n \uparrow\infty} \bigl(1 -
\widehat{Q}{}^Z \bigl[\widehat{\tau }{}^Z > n \bigr] \bigr)
\\
&=& \lim_{n \uparrow\infty} \bigl(1 - \E_P[Z_n]
\bigr) = 0,
\end{eqnarray*}
which completes the proof.
\end{pf}

The next result contains a discussion of the missing implication from
(d) to (c) in Theorem~\ref{tfoellmercountablesummary} if the state
space $E$ is countable.

\begin{prop} \label{PECountable}
Under Assumption~\ref{assP}, suppose that the state space $E$ is
countable. Then we can distinguish the following cases:
\begin{longlist}[(B)]
\item[(A)]
If $E$ has exactly one element and if $P[\zeta<\infty] = 0$ then
there is a unique F\"ollmer pair for each nonnegative
$P$-supermartingale $Z$. However, if \mbox{$P[\zeta<\infty] >0$} then the
F\"ollmer pair is not necessarily unique.
\item[(B)] If $E$ has more than one element, then:
\begin{enumerate}[(ii)]
\item[(i)] there exists a probability measure $P$ on the sigma algebra
$\F$, with $P[\zeta<\infty] = 0$, such that for each
$P$-supermartingale $Z$ that is not a $P$-local martingale there are
at least two different F\"ollmer pairs for $Z$;
\item[(ii)] there exists a probability measure $P$ on the sigma
algebra $\F$, with $P[\zeta<\infty] = 0$, and a $P$-supermartingale
$Z$ that is not a $P$-local martingale such that there is a unique F\"
ollmer pair for $Z$.
\end{enumerate}
\end{longlist}
\end{prop}

The proof of Proposition~\ref{PECountable} can be found in
Section~\ref{sfoellmeronpathspace}. The proposition implies, in
particular, that the implication from (d) to (c) in Theorem \ref
{tfoellmercountablesummary} requires $E$ to be uncountable.

\subsection{The finitely additive case}\label{sec3.2}

In the finitely additive case, we assume that the underlying
probability space is sufficiently large to support a Brownian motion.
If that assumption holds then it is possible to associate a F\"ollmer
finitely additive measure to any nonnegative $P$-supermartingale.

{\renewcommand{\theass}{($\mathcal{B}$)}
%
\begin{ass}\label{assB}
The filtered probability space $(\Omega, \F, (\F_t)_{t \ge0}, P)$
supports a Brownian motion $W = (W_t)_{t \ge0}$.
\end{ass}}%

An assumption that the underlying probability space is sufficiently
large, such as Assumption~\ref{assB}, is clearly necessary. For
example, if the sigma algebra $\F$ is of finite cardinality, then any
finitely additive probability measure is automatically countably
additive. So if $P$ charges every nonempty element of $\F$, then one
cannot have a F\"ollmer finitely additive measure for a
$P$-supermartingale $Z$ that is not a $P$-martingale.

%
\begin{teo} \label{Tapprox}
Under Assumption~\ref{assB}, there exists a F\"ollmer finitely
additive measure for the $P$-supermartingale $Z$. The F\"ollmer
finitely additive measure is never unique.
\end{teo}

Section~\ref{sfinitelyadditive} contains the proof of Theorem~\ref
{Tapprox}. Note that a similar proof also yields that, under
Assumption~\ref{assB}, the $P$-supermartingale $Z$ can be
approximated, in the sense of Fatou convergence, by a sequence of
uniformly integrable nonnegative martingales; see Theorem~\ref{TFatou}
in the Appendix~\ref{AFatou}.

Observe that the stopping times in Definition~\ref{DFAFM} were assumed
to be finite. We might also consider the extended $P$-supermartingale
$\overline{Z} = (\overline{Z}_t)_{t \in\infty}$ with $\overline
{Z}_t = Z_t$ for all $t \geq0$ and with an $\F_\infty$-measurable
$\overline{Z}_\infty\in[0, \lim_{t \uparrow\infty} Z_t]$; note
that the limit exists by the supermartingale convergence theorem. This
observation then motivates the following definition.

%
\begin{defn} \label{DEFAFM}
A weakly absolutely continuous, finitely additive probability measure
$Q\in\ba_1(\Omega, \F,P)$, such that
%
\begin{eqnarray}\label{eqQSing2}
(Q\mid _{\F_\rho} )^r[A] = \E_P[
\overline{Z}_\rho\1 _A]\qquad\mbox{for all }A \in
\F_\rho
\nonumber\\[-8pt]\\[-8pt]
\eqntext{\mbox{and (possibly infinitely-valued) stopping times }\rho,}
\end{eqnarray}
is called \emph{extended F\"ollmer finitely additive measure} for
$\overline{Z}$.
\end{defn}

We obtain a similar statement as in Theorem~\ref{Tapprox}; again, the
proof of the following theorem is provided in Section~\ref{sfinitelyadditive}.

\begin{teo} \label{Tapprox2}
Under Assumption~\ref{assB}, there exists an extended F\"ollmer
finitely additive measure for the extended $P$-supermartingale
$\overline{Z}$. The extended F\"ollmer finitely additive measure is
not unique if $\E_P[\overline{Z}_\infty] < 1$. The extended F\"
ollmer finitely additive measure is unique if $\E_P[\overline
{Z}_\infty] = 1$ and $\F= \F_\infty$.
\end{teo}

Note that an extended F\"ollmer finitely additive measure for the
extended \mbox{$P$-}supermartingale $\overline{Z}$ is automatically a F\"
ollmer finitely additive measure for the $P$-supermartingale $Z$.
As a corollary of the existence result in Theorem~\ref{Tapprox2}, we
make the following observation.

\begin{cor} \label{C39}
Under Assumption~\ref{assB}, there exists a F\"ollmer purely
finitely additive measure $Q$ for the $P$-supermartingale $Z$; to wit,
there exists $Q \in\ba_1(\Omega, \F, P)$, such that (\ref
{eqQSing}) holds and such that $Q^r = 0$.
\end{cor}

\begin{pf}
Define the extended $P$-supermartingale $\overline{Z}$ as above, now
with \mbox{$\overline{Z}_\infty= 0$}.
The existence result in Theorem~\ref{Tapprox2} then yields an extended
F\"ollmer finitely additive measure $Q$ for the extended
$P$-supermartingale $\overline{Z}$. The statement now follows from
the simple observation that $\dd Q^r / \dd P \leq\break  \dd(Q\mid _{\F_\infty
})^r / \dd P = 0$ from~(\ref{eqQSing2}) with~$\rho= \infty$.
\end{pf}
Note that Corollary~\ref{C39} includes the case that $Z$ is a
uniformly integrable \mbox{$P$-}local martingale; for example, consider $Z_t
= 1$ for all $t \geq0$. As a consequence, Corollary~\ref{C39}
illustrates that a sequence of probability measures, given on the sigma
algebras $\F_t$ for all $t \geq0$, cannot uniquely be extended to the
sigma algebra $\F_\infty$ within the class of finitely additive
probability measures; however, uniqueness holds within the class of
countably additive probability measures due to a \mbox{pi-lambda} argument. We
elaborate further on this point by discussing the case of the Lebesgue
measure on $[0,1]$ in Appendix~\ref{AE}.

\section{Proofs: F\"ollmer countably additive measure on the path space}\label{sec4}\label{sfoellmeronpathspace}

This section contains the proofs of the existence and uniqueness
results for the countably additive case in Section~\ref{Sresults}.
Before providing the proofs, we discuss some motivating examples to
outline the construction of the F\"ollmer countably additive measure.
Then we first give the proof of existence, and afterward the proof of
the assertions concerning uniqueness.

\subsection{Motivating examples}\label{sec4.1} \label{SSmotivating}
We start by discussing two illustrative examples.

%
\begin{ex}\label{exreversefoellmer}
Let $Q$ be a probability measure on the sigma algebra $\F$ and let $Y
= (Y_t)_{t \geq0}$ be a uniformly integrable nonnegative
$Q$-martingale that starts in $1$ and jumps to $0$ with positive
probability; that is,
assume that $Q[\tau<\infty, Y_{\tau-} \neq0]>0$, where $\tau=\inf
\{t \ge0\dvtx  Y_t = 0\}$. Next, define the probability measure $P$ by
$P(\dd\omega) = Y_\infty(\omega) Q(\dd\omega)$.
Then the process $Z = (Z_t)_{t \geq0}$ with $Z_t = \1_{\{t < \tau\}
}/Y_t$ is a strictly positive $P$-supermartingale, but it is not a
$P$-local martingale: fix $s<t$ and $A \in\F_s$. Then the inequalities
\[
\E_P[\1_A Z_t] = \E_Q \biggl[
\1_A \frac{1}{Y_t} \1_{\{t < \tau\}} Y_t \biggr]
\le\E_Q \biggl[ \1_A \frac{1}{Y_s}
\1_{\{s < \tau\}} Y_s \biggr] = \E_P[\1_A
Z_s]
\]
show that $Z$ is a $P$-supermartingale. Now let $(\tau_n)_{n \in\N
}$ be a nondecreasing sequence of stopping times such that $1= \E
_P[Z_{\tau_n}] = Q[\tau_n<\tau]$ holds for each $n \in\N$. Let us
show that $P[\lim_{n \uparrow\infty} \tau_n <\infty] > 0$, which
then implies that the $P$-supermartingale $Z$ is not a $P$-local
martingale. Toward this end, let $C>0$ be such that $Q[\tau\le C,
Y_{\tau-} \neq0]>0$. Observe that
\begin{eqnarray*}
P \Bigl[\lim_{n \uparrow\infty} \tau_n \le C \Bigr] &=& \lim
_{n
\uparrow\infty} P[\tau_n \le C] = \lim_{n \uparrow\infty}
\E _Q[Y_{\tau_n} \1_{\{\tau_n \le C\}}]
\\
&\ge&\E_Q \Bigl[ \Bigl(\inf_{t<\tau} Y_t
\Bigr) \1_{\{\tau\le C,
Y_{\tau-} \neq0\}} \Bigr]>0,
\end{eqnarray*}
where we used that $(\inf_{t<\tau} Y_t) >0$ on the event $\{Y_{\tau
-} \neq0\}$, which holds because $Y$ is a nonnegative $Q$-supermartingale.

The $P$-supermartingale $Z$ fails to be a $P$-local martingale
exactly because the $Q$-martingale $Y$ jumps to zero with positive
probability under the probability measure $Q$. If the $Q$-martingale
$Y$ did not jump to zero, it would be possible to stop $Y$ upon
crossing the level $1/n$ for each $n \in\N$, and this approach would
provide us with a localizing sequence of stopping times for the process
$Z$ under the probability measure $P$.
Note that, despite $Z$ not being a local martingale, it is of course
possible to construct its F\"ollmer countably additive measure on the
original space $(\Omega, \F)$: the pair $(Q,\tau)$ satisfies the
conditions in (\ref{ekunita-yoeurp}).
\end{ex}

%
\begin{ex}
Let $Z = (Z_t)_{t \geq0}$ be the $P$-supermartingale defined by $Z_t
=e^{-t}$ for all $t \geq0$. We want to interpret $Z$ as $1/Y$, where
$Y$ is a martingale under the F\"ollmer countably additive measure $Q$,
exactly as in Example~\ref{exreversefoellmer}. Since $Z$ is not a
local martingale, $Y$ must jump to zero with positive probability under
$Q$. Furthermore, $Q$ must be equivalent to $P$ before $Y$ hits zero.
This indicates that $Y_t = e^t \1_{\{t < \tau\}}$ under $Q$, where
$\tau$ is the stopping time when $Y$ hits zero.

Note that $Y$ is a martingale exactly if $\tau$ is standard
exponentially distributed and $P$ needs to satisfy $P[\tau=\infty
]=1$. In general, it is not possible to find such a stopping time $\tau
$ on $(\Omega,\F)$, think, for example, of the space $\Omega= \{0\}$
consisting only of one singleton. However, let $(\overline{\Omega},
\overline{\F}, (\overline{\F}_t)_{t \geq0})$ denote an extended
filtered space with $\overline{\Omega} = \Omega\times[0,\infty]$,
$\overline{\F} = \F\otimes\B([0,\infty])$, and $\overline{\F}_t
= \F_t \otimes\B([0,t])$, where $\B$ denotes the Borel sigma
algebra, and let $\overline{\tau}(\overline{\omega})$ denote the
second component of $\overline{\omega}$ for all $\overline{\omega}
\in\overline{\Omega}$. Then we can define the probability measures
$\overline{P} = P \otimes\delta_\infty$ and $\overline{Q} = P
\otimes\mu$ on this extended space, where $\delta_\infty$ is the
Dirac measure in infinity and $\mu$ is a standard exponential distribution.
It is not hard to check that the pair $(\overline{Q}, \overline{\tau
})$ satisfies the conditions in (\ref{ekunita-yoeurp}) with $P$ being
replaced by $\overline{P}$.

Now the crucial point is that even though a general $(\Omega, \F)$
might not be large enough to support an exponential time $\tau$, the
path space of Assumption~\ref{assP} is always large enough to
support $\tau$---as long as we allow for explosions to a cemetery
state in finite time.
In general we will not need an exponential time, but a stopping time
$\tau$ with distribution $Q[\tau> t] = \E_P[Z_t]$. However, this can
be easily reduced to the exponential case (or to the case of a uniform
variable on $[0,1]$) by a time change.
\end{ex}

The insights gained from these guiding examples allow us to construct a
F\"ollmer countably additive measure on the path space $(\Omega, \F)$
itself, rather than on the extended probability space $(\Omega\times
(0,\infty], \mathscr{P})$ used in \citet{F1972}, where
$\mathscr{P}$ denotes the predictable sigma algebra.
The crucial observation is that the F\"ollmer countably additive
measure of a local martingale can be constructed on $(\Omega, \F)$
without enlarging the space, and that a supermartingale fails to be a
local martingale if and only if under its F\"ollmer countably additive
measure, its inverse jumps to zero with positive probability. Thus, if
$(\Omega, \F)$ is large enough to allow for such a jump to zero, and
if we are able to describe what happens under the F\"ollmer countably
additive measure $Q$ once $1/Z$ jumps to zero, then we should be able
to construct the probability measure $Q$ on the sigma algebra $\F$.

In order to construct such a jump to zero, we proceed in a similar
manner as in the classical construction of killed diffusions, as
presented, for example, in Chapter~5 of \citet{ItoMcKean}: we
first introduce an independent random variable that triggers exactly
when the supermartingale loses mass, and we later forget about this
independent random variable when we project the constructed solution
down to the path space.

\subsection{F\"ollmer countably additive measure: Proof of existence}\label{sec4.2}
\label{SSconstruction}
In this subsection, we provide the proof of the existence statement in
Theorem~\ref{tfoellmercountablesummary}.

Let $Z = \MD$ be the multiplicative decomposition given in
Proposition~\ref{pmultdec}. Define the stopping times $(\widehat{\tau
}_n^M)_{n \in\N}$ and
$\widehat{\tau}{}^M$ exactly as in (\ref{eqtauM}), with $Z$ replaced
by $M$, and note that the stopped process $M^{\widehat{\tau}_n^M}$ is
a uniformly integrable martingale for each $n \in\N$. In particular,
we can define a sequence of measures $(Q^{(n)})_{n \in\N}$ by setting
$Q^{(n)}(\dd\omega) = M_{\widehat{\tau}_n^M(\omega)}(\omega)
P(\dd\omega)$. It is straightforward to check that $(Q^{(n)})_{n \in
\N}$ is consistent on the filtration $(\F_{\widehat{\tau}_n^M})_{n
\in\N}$, that is, that $Q^{(n)}(A) = Q^{(m)}(A)$ for all $A \in\F
_{\widehat{\tau}_m}$ and $m,n \in\N$ with $m \leq n$. Since the set
inclusion $\F_{\widehat{\tau}_n^M-} \subset\F_{\widehat{\tau
}_n^M}$ holds, the measures $(Q^{(n)})_{n \in\N}$ are also
consistently defined on $(\F_{\widehat{\tau}_n^M-})_{n \in\N}$.

According to Lemma~\ref{lCanProps}, the filtration $(\F_{\widehat
{\tau}_n^M-})_{n \in\N}$ is a standard system, a condition that
allows to apply Parthasarathy's extension theorem, provided in
Theorem~\ref{tparthasaratyextension}, which\vspace*{1pt} then yields the existence
of a unique probability measure $Q^M$ on $\bigvee_{n \ge0} \F
_{\widehat{\tau}_n^M-} = \F_{\widehat{\tau}{}^M-}$, such that
$Q^M\mid _{\F_{\widehat{\tau}_n^M-}}=Q^{(n)}\mid _{\F_{\widehat{\tau
}_n^M-}}$ for all $n \in\N$. Note that $P[\widehat{\tau}{}^M = \infty
] = 1$ and that
\begin{eqnarray*}
Q^M\bigl[A \cap\bigl\{t<\widehat{\tau}{}^M \bigr\}\bigr] &=& \lim_{n \uparrow\infty} Q^M\bigl[A \cap\bigl\{t<\widehat{
\tau}_n^M \bigr\}\bigr] = \lim_{n \uparrow\infty}
Q^{(n)}\bigl[A \cap\bigl\{t<\widehat{\tau}_n^M
\bigr\}\bigr]
\\
&=& \lim_{n \uparrow\infty} \E_P [M_{\widehat{\tau}_n^M} \1
_{A \cap\{t<\widehat{\tau}_n^M\}} ] = \lim_{n \uparrow\infty} \E_P[M_t
\1_{A \cap\{t<\widehat{\tau
}_n^M \}}]
\\
&=& \E_P[M_t \1_A]
\end{eqnarray*}
for all $t \geq0$ and $A \in\F_t$. Proposition~\ref{PP12} now
yields that (\ref{ekunita-yoeurp}) holds with $Q, \tau$, and $Z$
replaced by $Q^M, \widehat{\tau}{}^M$ and $M$, respectively. In
particular, if $Z$ is a local martingale, that is, if $Z \equiv M$, we
are done, as we may take any extension $\widehat{Q}{}^M$ of $Q^M$ to $\F
$ by Theorem~\ref{TC2}. Note that, in this case, we have $\widehat
{\tau}{}^M = \widehat{\tau}{}^Z$, as defined in (\ref{eqtauM}).

For the general case, we will now apply the ideas developed in
Section~\ref{SSmotivating} to construct a F\"ollmer countably additive
measure for the $P$-supermartingale $Z$. Toward this end, we define
the auxiliary space $\overline{\Omega} = \Omega\times[0,1]$ and
equip it with the sigma algebra $\overline{\F} = \F\otimes\mathcal
{B}([0,1])$, where $\mathcal{B}$ denotes the Borel sets. Let
$\overline{Q} = \widehat{Q}{}^M \otimes\mu$ denote the product
measure of $\widehat{Q}{}^M$ and $\mu$, where $\mu$ is the uniform
distribution on $[0,1]$. We will define a measurable map $\theta\dvtx
\overline{\Omega} \rightarrow\Omega$ and an $(\F_t)_{t \geq
0}$-stopping time $\tau$, such that $Q = \overline{Q}\circ\theta
^{-1}$ and $\tau$ satisfy~(\ref{ekunita-yoeurp}).

Before we continue, let us select a good version of the process $D$: we
may always suppose that $D$ is right-continuous and nonincreasing for
\emph{all} $\omega\in\Omega$, see Lemma~\ref{lcompletecadlag}.
Since $D$ starts at 1 and is nonnegative, $1-D$ is the (random)
distribution function of a measure on $[0,\infty)$ that has mass less
or equal to 1. The ``quantile function'' $\mathcal{Q}\dvtx  \Omega\times
[0,1] \rightarrow[0,\infty)$ of $1-D$ is defined as
\[
\mathcal{Q}_z(\omega) = \inf\bigl\{s \ge0\dvtx  1 - D_s(
\omega) \ge z\bigr\} = \inf\bigl\{s \ge0\dvtx  D_s(\omega) \le1 - z\bigr\}
\]
for all $z \in[0,1]$ and $\omega\in\Omega$. Note that
%
\begin{eqnarray}
\label{emeasurabilityquantilefunction}
\bigl\{(\omega,z)\dvtx  \mathcal{Q}_z(\omega) > t
\bigr\} &=& \bigl\{ (\omega,z)\dvtx  D_t(\omega) > 1-z \bigr\}
\nonumber\\[-8pt]\\[-8pt]\nonumber
&=& \bigcup_{q \in\Q\cap[0,1]} \bigl\{\omega\dvtx  D_t(
\omega) > 1-q\bigr\} \times[q,1] \in\overline{\F}.
\end{eqnarray}
Next, consider the map $\theta\dvtx  \overline{\Omega} \rightarrow\Omega
$ with
%
\begin{eqnarray}
\label{eqtheta} \theta(\omega, z) (t) = \cases{ \omega(t),&\quad$t <
\mathcal{Q}_z(\omega)$,
\cr
\Delta, &\quad$t \ge\mathcal{Q}_z(
\omega)$.}
\end{eqnarray}
To see that the map $\theta$ is measurable, it suffices to note that
\begin{eqnarray*}
\bigl\{(\omega,z)\dvtx  \theta(\omega, z) (t) \in B\bigr\} &=& \bigl\{(\omega,z)\dvtx
\omega (t) \in B, \mathcal{Q}_z(\omega) > t\bigr\};
\\
\bigl\{(\omega,z)\dvtx  \theta(\omega, z) (t) \in B \cup\{\Delta\} \bigr\} &=& \bigl
\{(\omega,z)\dvtx  \omega(t) \in B \cup\{\Delta\}, \mathcal{Q}_z(\omega) >
t \bigr\}
\\
&&{}\cup\bigl\{(\omega,z)\dvtx  \mathcal {Q}_z(\omega) \leq t\bigr\}
\end{eqnarray*}
hold for each Borel subset $B \subset E $, so that (\ref
{emeasurabilityquantilefunction}) implies $\{(\omega,z)\dvtx  \theta
(\omega, z)(t) \in B\} \in\overline{\F}$.

Next, define the stopping time $\tau= \widehat{\tau}{}^M \wedge\zeta
$ and the probability measure
$Q = \overline{Q} \circ\theta^{-1}$. Note that
\begin{eqnarray*}
Q \bigl[A \cap\{\tau> t\} \bigr] &=& \overline{Q} \bigl[\bigl\{(\omega,z)\dvtx
\omega\in A, \widehat{\tau}{}^M(\omega)>t, \zeta(\omega)>t,
\mathcal{Q}_z(\omega) > t\bigr\} \bigr]
\\
&=& \int_\Omega \biggl(\1_{A\cap\{\widehat{\tau}{}^M >
t\} \cap\{\zeta> t\}}(
\omega) \int_{[0,1]}\1_{(1-D_t(\omega
),1]}(z) \mu(\dd z) \biggr)
\widehat{Q}{}^M(\dd\omega)
\\
&=& \E_{\widehat{Q}{}^M} [D_t \1_{A\cap\{\widehat
{\tau}{}^M > t\} \cap\{\zeta> t\}} ]
= \E_P [D_t M_t\1_{A
\cap\{\zeta> t\}} ]
\\
&=& \E_P [Z_t\1_A ]
\end{eqnarray*}
for all $t \geq0$, using~(\ref{ekunita-yoeurprv}) in the second to
last step and
\[
0 \leq\E_P[Z_t \1_{A \cap\{\zeta\leq t\}}] \leq
\E_P[Z_\zeta\1_{\{
\zeta\leq t\}}] \leq\E_P[Z_\zeta
\1_{\{\zeta<\infty\}}] = 0
\]
in the last step. Another application of Proposition~\ref{PP12} then
completes the proof of the existence statement in Theorem~\ref
{tfoellmercountablesummary}.

\subsection{F\"ollmer countably additive measure: Proof of (non)uniqueness}\label{sec4.3}

Here, we provide the proofs of Lemma~\ref{lminimality}, of the
uniqueness statements of Theorem~\ref{tfoellmercountablesummary} and
of Proposition~\ref{PECountable}.\vadjust{\goodbreak}

\begin{pf*}{Proof of Lemma~\ref{lminimality}}
Let $(Q, \tau)$ also satisfy (\ref{ekunita-yoeurp}).
By Theorem~\ref{tlubinextension} in conjunction with Lemma~\ref{lCanProps} there exists an extension $\overline{Q}$ of $Q$ from $\F
_{\tau-}$ to
$\F_{\overline{\tau}-}$, where $\overline{\tau} = \widehat{\tau
}{}^Z \vee\tau$.
Note that
\[
\overline{Q}\bigl[\widehat{\tau}{}^Z< \tau\bigr] = Q\bigl[\widehat{
\tau}{}^Z < \tau \bigr] = \E_P[Z_{\widehat{\tau}{}^Z}
\1_{\{\widehat{\tau}{}^Z< \infty\}}] = 0
\]
by Proposition~\ref{PP12} and that $\overline{Q}[\widehat{\tau}{}^Z_n
< \tau] = Q[\widehat{\tau}_n^Z < \tau] = \E_P[Z_{\widehat{\tau
}_n^Z}] = 1$. These computations imply that $\overline{Q}[\widehat
{\tau}{}^Z= \tau] =1$ since $\widehat{\tau}_n^Z(\omega) \uparrow
\widehat{\tau}{}^Z(\omega)$ for all $\omega\in\Omega$.
Therefore, $(\overline{Q}, \overline{\tau})$ also satisfies (\ref
{ekunita-yoeurp}). This again yields the uniqueness of the extension.
\end{pf*}

Now, let us prove the uniqueness statement in Theorem~\ref
{tfoellmercountablesummary}.

Concerning the equivalence in (I), note that $Q$ is uniquely determined
on $\F_{\tau-}$ by (\ref{ekunita-yoeurp}). The statement then
follows from the uniqueness result of Theorem~\ref{TC2}. The statement
in (II) is proven in Proposition~2 in \citet{Yoeurp1985}.

We next show that the statement in (c) implies the one in (d). Thus,
assume that (c) holds and let $(Q, \tau)$ also satisfy (\ref
{ekunita-yoeurp}). Then Lemma~\ref{lminimality} implies that also $(Q,
\widehat{\tau}{}^Z)$ satisfies (\ref{ekunita-yoeurp}). This yields
that $\widehat{Q}{}^Z$ and $Q$ agree on $\F_{\widehat{\tau}{}^Z-}$ and
we may apply the implication from (a) to (b).

For the reverse implication from (d) to (c), we assume that the state
space $E$ is uncountable. Thanks to the implication from (b) to (a), we
only need to show that if $Z$ is not a $P$-local martingale, then
there are two different F\"ollmer pairs for $Z$. Toward this end,
consider the family of stopping times $(\rho_x)_{x \in E}$, defined by
%
\begin{equation}
\label{ErhoX} \rho_x = \inf\{t \ge0\dvtx  \mbox{there exists }
\varepsilon> 0\mbox{ s.t. } \omega\mid _{[t, t+\varepsilon)} \equiv x\};
\end{equation}
here, the right-continuity of the filtration $(\F_t)_{t \geq0}$ is
used to guarantee that ``peeking into the future is allowed,'' and thus
each $\rho_x$ is indeed a stopping time.
Since the state space $E$ is uncountable, by Lemma~\ref
{lomegastaysincountablymanyx}, there must exist some $x^* \in E$ for
which $P[\rho_{x^*} < \infty] = 0$. We define $\widetilde{\theta}$
as in (\ref{eqtheta}) but with $\Delta$ replaced by $x^*$ and
construct, exactly as in the construction of the existence proof in
Section~\ref{SSconstruction}, a~F\"ollmer pair $(Q^*,\tau^*)$ with
stopping time $\tau^* = \widehat{\tau}{}^M \wedge\rho_{x^*}$, where
$\widehat{\tau}{}^M$ is as in Section~\ref{SSconstruction}. If
$(Q,\tau)$ is the F\"ollmer pair constructed in Section~\ref
{SSconstruction}, then we have $Q[\rho_{x*} < \tau_M] = 0$ but
$Q^*[\rho_{x*} < \tau_M] > 0$ and, therefore, $Q^* \neq Q$. This
completes the proof of Theorem~\ref{tfoellmercountablesummary}.

%
\begin{rmk}
The proof that (d) implies (c) in Theorem~\ref
{tfoellmercountablesummary} leads to the following observation: if the
state space $E$ is uncountable and if the $P$-local martingale $M$ in
the multiplicative decomposition of $Z$ is a true $P$-martingale then
a F\"ollmer countably additive measure $Q$ can be defined so that
$Q[\zeta= \infty] = 1$. In particular, in such a case, the state
space of Assumption~\ref{assP} would not need to be enlarged with
the cemetery state~$\Delta$.
\end{rmk}

\begin{pf*}{Proof of Proposition~\ref{PECountable}}
To show (A), assume first that $P[\zeta<\infty] = 0$ and let the two
pairs $(Q, \tau)$ and $(\widetilde{Q}, \widetilde{\tau})$ both
satisfy (\ref{ekunita-yoeurp}). We obtain \mbox{$Q[\tau\leq\zeta] = 1$}
from (\ref{ekunita-yoeurprv}) with $G \equiv1$ and $\rho= \zeta$.
Assume now that $\{\zeta\leq t\} \subsetneq\{\tau\wedge\zeta\leq
t\}$
for\vadjust{\goodbreak} some $t > 0$. Since $\{\zeta> t\}$ is an atom in $\F_t$ we then
have $\{\tau\wedge\zeta\leq t\} = \Omega$, which contradicts
$P[\tau\wedge\zeta< \infty] = 0$. Thus, we have $Q[\tau= \zeta] =
1$ and, similarly, $\widetilde{Q}[\widetilde{\tau} = \zeta] = 1$,
which, in particular, implies that $Q\mid _{\F_{\zeta-}} = \widetilde
{Q}\mid _{\F_{\zeta-}}$ and an application of Lemma~\ref{lCanProps} concludes.

Next, consider a probability measure $P$ such that $P[\zeta=1] = 1$
and the \mbox{$P$-}supermartingale $Z$ given by $Z_t = \1_{1>t}$ for all $t
\geq0$. As candidate for a stopping time, consider $\tau= 1 + \infty
\1_{\{\zeta\leq1\}}$. It is clear that $P[\tau=\infty] = 1$.
Consider next two measures $Q_1$ and $Q_2$ such that $Q_1[\zeta=2] =
1$ and $Q_2[\zeta=3] = 1$. Then $(Q_1, \tau)$ and $(Q_2, \tau)$ are
two different F\"ollmer pairs for the $P$-supermartingale $Z$.

For the claim of existence in (i), just fix $x^* \in E$ and consider a
probability measure $P$ under which $P[\rho_{x^*}= \infty] = 1$
holds, where $\rho_{x^*}$ is defined as in
(\ref{ErhoX}) and then proceed as in the proof of the implication from
(d) to (c) in Theorem~\ref{tfoellmercountablesummary}.

For the claim in (ii), assume that $E=\{0,\ldots, n\}$ or $E = \N_0$.
Let $\rho$ denote the infimum of the jump times of the canonical
process $\omega$ to another state in the state space $E$ and let $P$
denote a probability measure on the sigma algebra $\F$ so that
$P[\omega(0) = 0] = 1$, and such that at time $1$ (but not before),
the coordinate process jumps to any other state in $E\setminus\{0\}$
with strictly positive probability or stays in~$0$ with strictly
positive probability. Then we have, in particular, $P[\rho\ge1] = 1$
and $P[\rho> 1] > 0$ as well as $P[\rho= 1] > 0$. We now consider the
$P$-supermartingale~$Z$, given by $Z_t = \1_{1>t}$ for all $t \geq0$.

Assume that the pair $(Q, \tau)$ is a F\"ollmer pair. We want to show that
$Q[\tau= \zeta] = 1$ holds, which yields the uniqueness of the F\"
ollmer pair, as in the proof of (A). Toward this end, note that $Q[\tau
= 1] = 1=Q[\tau\leq\zeta]$, again, as in the proof of (A), and that
\[
\Omega= \{\rho\leq1\} \cup\{\zeta\leq1\} \cup\{\zeta\wedge\rho > 1 \}.
\]
Thus, if we can show that $Q[\zeta\wedge\rho> 1 ] = 0$ and $Q[\rho
_i = 1 ] = 0$ for all $i \in E \setminus\{0\}$, where $\rho_i$
denotes the first hitting time of level $i$ by the canonical process,
then we have $Q[\zeta\le1] = Q[\zeta\ge1] = 1$ and, therefore,
$Q[\zeta= 1 = \tau] = 1$.

Assume first that $Q[\{\tau= 1\} \cap\{\zeta\wedge\rho> 1 \}] =
Q[\zeta\wedge\rho> 1 ] >0$. Then, by Problem~1.2.2 in \citet
{KS1}, we have $\tau(\omega) =1$ for all $\omega\in\Omega$ with
$\omega(0) = 0$ and $\zeta(\omega) \wedge\rho(\omega) > 1$;
however, this would lead to the contradiction
\[
0 < P[\zeta\wedge\rho> 1] \leq P[\tau= 1] \leq P[\tau< \infty] = 0.
\]
Next, fix $i \in E \setminus\{0\}$ and assume that $Q[\rho_i = \tau]
= Q[\rho_i = 1] > 0$.
Observe that
\begin{eqnarray*}
&& \bigl\{(\omega, t) \in\Omega\times[0,\infty)\dvtx  \omega(0)=0, \tau (\omega) =
\rho(\omega) = \rho_i(\omega) = t \bigr\}
\\
&&\qquad = \bigl\{(\omega, t) \in\Omega\times B\dvtx  \omega (0)=0, \rho(
\omega) = \rho_i(\omega) = t \bigr\}
\end{eqnarray*}
for some Borel set $B \subset[0,\infty)$; see again Problem~1.2.2 in
\citet{KS1}. By assumption, we have that $1 \in B$.
This then yields that
\[
0 < P[\rho_i = 1] \leq P[\rho_i \in B] = P[\tau=
\rho_i \in B] \leq P[\tau\in B] \leq P[\tau< \infty] = 0.
\]
This contradiction completes the proof.
\end{pf*}

\section{Proofs: F\"ollmer finitely additive measure}\label{sec5}\label{sfinitelyadditive}
\subsection{F\"ollmer finitely additive measure: Proof of existence}\label{sec5.1}
We now prove the existence statement of Theorem~\ref{Tapprox} with the
help of several lemmas. The idea of the proof is a combination of
approximating the $P$-supermartingale $Z$, approximating those
approximations again, and using a compactness argument.
We split the results up in three subsections. First, we recall some
fundamental observations that will be the key component of the proof,
then we collect several useful approximations, and finally, we will put
everything together in the proof of existence of a F\"ollmer finitely
additive measure for the $P$-supermartingale $Z$.

\subsubsection{Fundamental observations}\label{sec5.1.1}

\begin{prop}[{[\citet{CSW2001}, Proposition~A.1]}] \label{PCSW}
Consider a sequence $(Q^{(n)})_{n \in\N}$ of finitely additive
probability measures in $\ba_1(\Omega, \F,P)$. Assume that $\dd
(Q^{(n)})^r/\dd P$ converges almost surely to a nonnegative random
variable $G$. Then any cluster point $Q$ of $(Q^{(n)})_{n \in\N}$ in
$L^\infty(\Omega, \F, P)^*$ satisfies $Q^r(\dd\omega) = G(\omega)
P(\dd\omega) $.
\end{prop}

This powerful result will enable us to approximate the
$P$-supermartingale $Z$, step by step, with processes for which it is
relatively simple to construct the corresponding finitely additive
probability measure. Toward this end, we shall rely on the following
consequence of the Banach--Alaoglu theorem:

%
\begin{cor}\label{cconstructionoffinitelyadditivefoellmer}
Let $(Q^{(n)})_{n \in\N}$ be a sequence of finitely additive
probability measures in $\ba_1(\Omega, \F,P)$. Assume that the
Radon--Nikodym derivatives $\dd(Q^{(n)}|_{\F_\rho})^r/\dd P|_{\F
_\rho}$ converge to $Z_\rho$ as $n$ tends to infinity almost surely,
for each finite stopping time $\rho$ (resp., to $\overline
{Z}_\rho$ for each stopping time). Then there exists a F\"ollmer
finitely additive measure for the $P$-supermartingale $Z$
(resp., an extended F\"ollmer finitely additive measure for the
extended $P$-supermartingale $\overline{Z}$).
\end{cor}
\begin{pf}
First, note that we may identify $\ba_1(\Omega, \F,P)$ with a subset
of the unit ball of $L^\infty(\Omega,\F,P)^*$. The Banach--Alaoglu
theorem then implies that $(Q^{(n)})_{n \in\N}$ has a cluster point
$Q$. Next, observe that $Q\mid _{\F_\rho}$ is also a cluster point of the
sequence $(Q^{(n)}\mid _{\F_\rho})_{n \in\N}$.
We conclude the argument with an application of
Proposition~\ref{PCSW}.
\end{pf}

To illustrate the approach we shall follow, and for later use, we now
discuss the case that $Z$ is a $P$-local martingale:

\begin{ex}\label{exlocalmartingalefinitelyadditivemeasure}
Assume that $Z$ is a $P$-local martingale. Then there exists a F\"
ollmer finitely additive measure for $Z$. To see this, let $(\rho
_n)_{n \in\N}$ denote a sequence of localizing stopping times for
$Z$. Then, for each $n \in\N$, the uniformly integrable
$P$-martingale $Z^{\rho_n}$ defines a probability measure $Q^{(n)}$
that is absolutely continuous\vadjust{\goodbreak} with respect to $P$. Since $Q^{(n)}$ has
no singular part, for each finite stopping time $\rho$ the
Radon--Nikodym derivative $\dd(Q^{(n)}\mid _{\F_\rho})^r/\dd P\mid _{\F
_\rho}$ is given by $Z^{\rho_n}_\rho$ and, therefore, converges
almost surely to $Z_\rho$ as $n$ tends to~$\infty$ (indeed the null
set outside of which convergence takes place does not depend on $\rho
$). Corollary~\ref{cconstructionoffinitelyadditivefoellmer} now
implies the existence of a F\"ollmer finitely additive measure for $Z$.
\end{ex}

\subsubsection{Approximations}\label{sec5.1.2} \label{SSapprox}
Recall that the Doob--Meyer decomposition of the \mbox{$P$-}supermartingale
$Z$ is given by $Z = M + D$, where $M$ is a $P$-local martingale and
$D$ is a predictable nonincreasing process with $D_0 = 0$. This
decomposition is unique up to indistinguishability.
Example~\ref{exlocalmartingalefinitelyadditivemeasure} indicates that
the local martingale component of $Z$ can be handled easily. Thus, in
the following, we shall focus mostly on approximating the nonincreasing
process $D$. Toward this end, we introduce the notion of a simple
process, which, in particular, has c\`adl\`ag paths.

%
\begin{defn}
A process $G= (G_t)_{t \geq0}$ is called \emph{simple process} if
there exists a strictly increasing sequence of stopping times $(\rho
_n)_{n \in\N_0}$ and a sequence of random variables $(H_n)_{n \in\N
}$ such that $\rho_0(\omega) =0$ and $\lim_{n \uparrow\infty} \rho
_n (\omega) = \infty$ for all $\omega\in\Omega$, $H_n$ is $\F
_{\rho_{n-1}}$-measurable for all $n \in\N$, $H_0$ is $\F
_0$-measurable, and
\[
G_t(\omega) = H_0(\omega) \1_{t = 0} + \sum
_{n =1}^\infty H_n(\omega )
\1_{(\rho_{n-1}(\omega),\rho_{n}(\omega)]}(t)
\]
holds for all $\omega\in\Omega$ and $t \geq0$.
\end{defn}

%
\begin{lem}\label{lapproximatedecreasingbysimpleprocesses}
Let $G = (G_t)_{t \geq0}$ be a nonincreasing adapted process with c\`
adl\`ag paths. Then there exists a sequence of nonincreasing simple\vspace*{1pt}
processes $(G^{(k)})_{k \in\N}$ with $G^{(k)} = (G_t^{(k)})_{t \geq
0}$ such that almost surely $G^{(k)}_0= G_0$, $\lim_{k \uparrow\infty
} G^{(k)}_t = G_{t-}$, and $G^{(k)}_t \ge G_t$ for all $n \in\N$ and
$t \geq0$.
\end{lem}

\begin{pf}
It suffices to set
\[
G^{(k)}_t(\omega) = G_0(\omega)
\1_{t = 0} + \sum_{n=0}^{k 2^{k}-1}
G_{n 2^{-k}}(\omega) \1_{(n2^{-k}, (n+1)2^{-k}]}(t) + G_k(\omega)
\1_{(k, \infty)}(t)
\]
for all $\omega\in\Omega$ and $t \geq0$.
\end{pf}

The crucial observation now is that every nonincreasing simple process
is the limit of a sequence of local martingales, at least as long as
the filtered probability space is rich enough to support a Brownian motion.
Before we discuss the general result, the next example illustrates that
such an approximation is possible.

\begin{ex}\label{exonejumpsupermartingaleFatoulimit}
Under Assumption~\ref{assB}, let $ Z$ be a deterministic process
with $Z_t=1$ for all $t \in[0,1)$ and $Z_t = a \in[0,1]$ for all $t
\in[1, \infty)$. Then there exists a F\"ollmer finitely additive
measure for $Z$. To see this, define the continuous\vadjust{\goodbreak} \mbox{$P$-}local
martingales $(\mathcal{E}^{(m)})_{m \in\N}$ with $\mathcal{E}^{(m)}
= (\mathcal{E}^{(m)}_t)_{t \geq0}$ and $(N^{(m)})_{m \in\N}$ with
$N^{(m)} = (N^{(m)}_t)_{t \geq0}$ by
\begin{eqnarray*}
\mathcal{E}^{(m)}_t = \cases{ 1, &\quad$t
\in[0,1-2^{-m})$,
\vspace*{2pt}\cr
\displaystyle\exp \biggl( \int_{1-2^{-m}}^t
\frac{1}{\sqrt{1-s}}\,\dd W_s - \frac
{1}{2} \int
_{1-2^{-m}}^t \frac{1}{1-s} \,\dd s \biggr), &\quad$t
\in [1-2^{-m}, 1)$,
\vspace*{2pt}\cr
0, &\quad$t \in[1,\infty)$}
\end{eqnarray*}
and
\[
N^{(m)}_t = 1 + \int_0^t
(1-a) \frac{\mathcal{E}^{(m)}_s}{\sqrt
{1-s}} \1_{[1-2^{-m},1)}(s) \,\dd W_s.
\]
Then $N^{(m)} = a + (1-a)\mathcal{E}^{(m)}$ for each $n \in\N$; in
particular $N^{(m)} _t = 1$ for all $t \in[0,1-2^{-m}]$, and $N^{(m)}
_t = a$ for
all $t \in[1, \infty)$.

Therefore, if $\rho$ is a finite stopping time, then $N^{(m)}_\rho$
converges almost surely to~$Z_{\rho}$, and the null set outside of
which the convergence holds does not depend on $\rho$. By
Corollary~\ref{cconstructionoffinitelyadditivefoellmer} in conjunction
with Example~\ref{exlocalmartingalefinitelyadditivemeasure}, there
exists a F\"ollmer finitely additive measure for $Z$.
\end{ex}

%
\begin{lem}\label{lapproximatesimplebylocalmartingales}
Under Assumption~\ref{assB}, let $G = (G_t)_{t \geq0}$ be a
nonincreasing simple process. Then there exists a sequence of
continuous local martingales $(N^{(m)})_{m \in\N}$ with
$N^{(m)} = (N_t^{(m)})_{t \geq0}$ such that almost surely $\lim_{m
\uparrow\infty} N^{(m)}_\rho= G_{\rho}$, $N^{(m)}_0 = G_0$ and
$N^{(m)}_\rho\ge G_\rho$ for all finite stopping times $\rho$ and $m
\in\N$.
\end{lem}

\begin{pf}
The sequence of local martingales $(N^{(m)})_{m \in\N}$ can be
constructed in the same manner as described in Example~\ref
{exonejumpsupermartingaleFatoulimit}. Toward this end, define again
certain stochastic exponentials as follows. Let the sequence of
stopping times $(\rho_n)_{n \in\N}$ denote the (well-ordered) jump
times of $G$, set $\rho_0 =0$, and define the continuous $P$-local
martingales $(\mathcal{E}^{(m,n)})_{m,n\in\N_0}$ with $\mathcal
{E}^{(m,n)} = (\mathcal{E}^{(m,n)}_t)_{t \geq0}$ by $\mathcal
{E}^{(m,n)}_t = 1$ for all $t \in[0,\rho_n)$,
\[
\mathcal{E}^{(m,n)}_t = \exp \biggl(\int_{\rho_{n}}^{t}
\frac
{1}{\sqrt{\rho_n+2^{-m}-s}} \,\dd W_s - \frac{1}{2} \int
_{\rho
_n}^{t} \frac{1}{\rho_n+2^{-m}-s} \,\dd s \biggr)
\]
for all $t \in[\rho_n, \rho_n+2^{-m})$, and $ \mathcal{E}^{(m,n)}_t
= 0$ for all $t \ge\rho_n+2^{-m}$.

Next, for each $m \in\N_0$, define the ``suicide strategy'' $H^{(m)}
= (H^{(m)}_t)_{t \geq0}$ by
\[
H^{(m)}_t = \sum_{n = 0}^\infty(G_{\rho_n}
- G_{\rho_{n}+}) \frac
{\mathcal{E}^{(m,n)}_t}{\sqrt{\rho_n+2^{-m}-t}} \1_{[\rho_n,\rho
_n+2^{-m})}(t)
\]
and construct the local martingale $N^{(m)} = (N^{(m)}_t)_{t \geq0}$ by
\[
N^{(m)}_t = G_0 + \int_0^t
H^{(m)}_s \,\dd W_s.
\]
Almost surely, for each $m,n\in\N_0$, on the event $\{\rho_{n+1} -
\rho_n > 2^{-m}\}$ we have $N^{(m)}_t = G_{t}$ for all $t \in[\rho
_n+2^{-m}, \rho_{n+1}]$. This implies that almost surely $\lim_{m
\uparrow\infty} N^{(m)}_\rho= G_{\rho}$ holds for all finite
stopping times $\rho$.
\end{pf}

So far, we have approximated the process $D_-$, where $D$ is the
nondecreasing process in the Doob--Meyer decomposition of the
$P$-supermartingale $Z$, by simple processes, and we approximated
those simple processes by local martingales. What remains to be shown
is how to pass from the left-continuous process $D_-$ to the
right-continuous process $D$. In the following lemma, we will provide
the key component for this step, using the fact that the process $D$ is
predictable. The proof of the next lemma is tedious but the underlying
idea for it is very simple.

To illustrate that simple idea, let $i \in\N$ be a positive constant
and let $\sigma_1$ denote the first time that $D$ jumps down by more
than $1/i$. This jump time is predictable, thus, in particular, there
exists an announcing sequence $(\sigma^{(j)}_1)_{j \in\N}$ for
$\sigma_1$.
With the help of these stopping times, we define the
$P$-supermartingale ${Z}^{(j)}$ as
\[
{Z}^{(j)}_t = \cases{ \displaystyle\E[Z_{\sigma_1}\mid
\F_t], &\quad$t \in\bigl[\sigma^{(j)}_1,
\sigma_1\bigr]$;
\vspace*{2pt}\cr
Z_t, &\quad otherwise,}
\]
for each $j \in\N$.
Then the expectation of ${Z}^{(j)}$ is constant on $[\sigma^{(j)}_1,
\sigma_1]$, and, in particular, the $P$-supermartingale ${Z}^{(j)}$
can be decomposed in a local martingale and a nonincreasing process
that stays constant on an interval before the stopping time $\sigma
_1$. One can now show, and that is what the proof of Lemma~\ref
{lleft-continuousapprox} will do, that this local martingale plus the
left-continuous version of the nonincreasing process converges to the
$P$-supermartingale~$Z$ at time $\sigma_1$ and to $M + D_-$ at all
other times, as $j$ tends to infinity.

%
\begin{lem}\label{lleft-continuousapprox}
Let $Z$ have Doob--Meyer decomposition $Z = M + D$ and fix $i \in\N$.
Then there exists a sequence of $P$-local martingales
$({M}^{(i,j)})_{j \in\N}$ with ${M}^{(i,j)} = ({M}^{(i,j)}_t)_{t \geq
0}$ and ${M}_0^{(i,j)} =1$ and a sequence of c\`adl\`ag, adapted,
nonincreasing processes $({D}^{(i,j)})_{j \in\N}$ with ${D}^{(i,j)} =
({D}^{(i,j)}_t)_{t \geq0}$ and ${D}_0^{(i,j)} =0$ such that the
$P$-supermartingales $ {M}^{(i,j)} + {D}^{(i,j)}$ are nonnegative and
such that almost surely
\[
\lim_{j \uparrow\infty} \bigl({M}^{(i,j)}_\rho+
{D}^{(i,j)}_{\rho
-} \bigr) = M_\rho+ (
\1_{\{\Delta D_\rho\leq-1/i\}} D_\rho+ \1_{\{\Delta D_\rho> -1/i \}} D_{\rho-} )
\]
for each finite stopping time $\rho$.
\end{lem}

\begin{pf}
We shall work on the completion of $(\Omega,\F, (\F_t)_{t \geq0},
P)$, so that we can assume $D$ to be predictable and c\`adl\`ag for
\emph{all} $\omega\in\Omega$. Once we constructed $M^{(i,j)}$ and
$D^{(i,j)}$ on this completion, we may switch to indistinguishable
versions that are adapted to $(\F_t)_{t \geq0}$; see Lemma~\ref{lcompletecadlag} in the Appendix~\ref{Afiltration}.
Define $\sigma_0 = 0$ and the sequence of stopping times $(\sigma
_n)_{n \in\N}$ iteratively by
\[
\sigma_n = \inf \biggl\{ t > \sigma_{n-1}\dvtx  \Delta
D_t \leq-\frac
{1}{i} \biggr\}.
\]
Since the process $D$ is predictable, the jump time $\sigma_n$ is a
predictable time for each $n \in\N$; thus, there exists
an announcing sequence $(\sigma^{(j)}_n)_{j \in\N}$, such that $\lim_{j \uparrow\infty} \sigma^{(j)}_n(\omega) = \sigma_n(\omega)$
and $\sigma^{(j)}_n(\omega) \le\sigma^{(j+1)}_n(\omega) < \sigma
_n(\omega)$ for all $j \in\N$ and $\omega\in\Omega$; see 1.2.16
in \citet{JacodS}.

Next, since $\sigma_{n-1} < \sigma_n$ holds on the event $\{\sigma
_{n-1} < \infty\}$ for all $n \in\N$, we may assume, without loss of
generality, that $\sigma^{(j)}_n \geq\sigma_{n-1}$ holds with strict
inequality on the event $\{\sigma_{n-1} < \infty\}$ for all $n,j \in
\N$; if not, we just replace $\sigma^{(j)}_n $ by
\[
\inf\bigl\{t > \sigma_{n-1} \vee\sigma^{(j-1)}_n\dvtx  t
= \sigma ^{(k)}_n\mbox{ for some } k \in\N\bigr\}.
\]
Thus, we have $\sigma^{(j)}_n \in(\sigma_{n-1},\sigma_n)$ on the
event $\{\sigma_{n-1} < \infty\}$ for all $n,j \in\N$.

For all $j \in\N$, define now the processes ${M}^{(i,j)}$ and
${D}^{(i,j)}$ by
\begin{eqnarray*}
{M}^{(i,j)}_t &=& M_t
+ \sum_{n=1}^\infty \bigl(
\1_{\{\sigma_n^{(j)} \leq t\}} \bigl( \E_P [Z_{\sigma_n}\mid
\F_t ] - \E_P [Z_{\sigma
_n}\mid \F_{\sigma_n^{(j)}} ]
- M_{\sigma_n\wedge t} + M_{\sigma_n^{(j)}} \bigr) \bigr);
\\
{D}^{(i,j)}_t &=& D_t + \sum
_{n=1}^\infty \bigl( \1_{\{\sigma_n^{(j)}
\leq t\}} \bigl(
\E_P [Z_{\sigma_n}\mid \F_{\sigma_n^{(j)}} ] - M_{\sigma_n^{(j)}}
- D_{\sigma_n\wedge t} \bigr) \bigr)
\end{eqnarray*}
for all $t \geq0$, where we always take the same version of the
conditional expectations for the two processes and a c\`adl\`ag
modification of the processes $( \E_P [Z_{\sigma_n}\mid \F_t
])_{t \geq0}$ for all $n \in\N$. Clearly, the processes
${M}^{(i,j)}$ and ${D}^{(i,j)}$ are c\`adl\`ag, satisfy ${M}^{(i,j)}_0
= 1$ and ${D}^{(i,j)}_0 = 0$, and are $P$-local martingales, or
nonincreasing, respectively, for all $j \in\N$. We compute
\[
{M}^{(i,j)}_t + {D}^{(i,j)}_t =
Z_t + \sum_{n=1}^\infty \bigl( \1
_{\{\sigma_n^{(j)} \leq t\}} \bigl( \E_P [Z_{\sigma_n}\mid \F
_t ] - Z_{\sigma_n \wedge t} \bigr) \bigr)
\]
for all $t \geq0$, which, in particular, yields that $ {M}^{(i,j)} +
{D}^{(i,j)}$ is nonnegative for each $j \in\N$.

Moreover, fix a finite stopping time $\rho$ and note that
\begin{eqnarray*}
&& {M}^{(i,j)}_\rho+ {D}^{(i,j)}_{\rho-}
\\
&&\qquad =
M_\rho+ D_{\rho-}
\\
&&\quad\qquad{} + \sum_{n=1}^\infty \bigl(
\1_{\{\sigma_n^{(j)}
\leq\rho\}} \bigl( \E_P [Z_{\sigma_n}\mid
\F_\rho ] - M_{\sigma_n \wedge\rho} - D_{\rho-} \1_{\{\rho\leq\sigma_n\}}
-D_{\sigma_n} \1_{\{\rho> \sigma_n\}} \bigr) \bigr)
\\
&&\quad\qquad{} - \sum_{n=1}^\infty \bigl(
\1_{\{\sigma_n^{(j)} =
\rho\}} \bigl( \E_P [Z_{\sigma_n}\mid
\F_{\sigma
_n^{(j)}} ] - M_{\sigma_n^j} - D_{\rho-} \bigr) \bigr)
\end{eqnarray*}
for each $j \in\N$.
For each $\omega\in\Omega$, there exists maximally finitely many $j
\in\N$ such that the identity $\sigma_n^{(j)}(\omega) = \rho
(\omega)$ holds for some $n \in\N$. Thus, a-fortiori, the last sum
converges to zero as $j$ tends to infinity. For studying the first sum, fix
$n \in\N$. Then we want to show that
%
\begin{equation}
\label{eq58new} \quad\lim_{j \uparrow\infty} \bigl(\1_{\{\sigma_n^{(j)} \leq\rho\}} \bigl(
\E_P [Z_{\sigma_n}\mid \F_\rho ] - Z_{\sigma_n
\wedge\rho}
+ \Delta D_{\rho} \1_{\{\rho\leq\sigma_n\}} \bigr) \bigr) = \1_{\{\rho= \sigma_n\}}
\Delta D_\rho
\end{equation}
almost surely, where the null set on which the equality does not hold,
can be chosen independently of the stopping time $\rho$. This then
proves the statement. Path-by-path, (\ref{eq58new}) holds on the event
$\{\rho< \sigma_n\}$; thus, we only need to argue that $\E_P
[Z_{\sigma_n}\mid \F_\rho ] = Z_{\sigma_n}$ holds on the event $\{
\rho\geq\sigma_n\}$ almost surely (independently of the choice of
$\rho$). To see this, note that almost surely $\E_P[Z_{\sigma_n} \mid \F
_q] \mathbf{1}_{\sigma_n\le q} = Z_{\sigma_n} \mathbf{1}_{\sigma
_n\le q}$ holds for all $q \in\Q\cap[0,\infty)$ and recall that we
chose a right-continuous modification of $(\E_P[Z_{\sigma_n} \mid \F
_t])_{t \ge0}$.
\end{pf}

The next result summarizes the approximation results that we obtained
so far in this subsection.

\begin{prop}\label{PapproxofZ}
Under Assumption~\ref{assB}, there exists a family\break
$(L^{(i,j,k,m,n)})_{i,j,k,m,n \in\N}$ of\vspace*{1pt} uniformly integrable
nonnegative $P$-martingales with $L^{(i,j,k,m,n)} =
(L^{(i,j,k,m,n)}_t)_{t \geq0}$ and $\E_P[L^{(i,j,k,m,n)}_0]=1$ for
all $i,j,k,m,n \in\N$, such that almost surely
%
\begin{equation}
\label{eq59convergence} \lim_{i \uparrow\infty} \lim_{j\uparrow\infty} \lim
_{k \uparrow
\infty} \lim_{m \uparrow\infty}\lim_{n \uparrow\infty}
L^{(i,j,k,m,n)}_\rho= Z_\rho
\end{equation}
for all finite stopping times $\rho$.
\end{prop}

\begin{pf}
The statement follows by first applying Lemma~\ref
{lleft-continuousapprox}, then approximating the corresponding
processes ${D}^{(i,j)}$ via Lemmas~\ref{lapproximatedecreasingbysimpleprocesses} and \ref
{lapproximatesimplebylocalmartingales} for each $i,j \in\N$, and
finally localizing the approximating local martingales.
\end{pf}

We emphasize that there exists one $P$-null set outside of which (\ref
{eq59convergence}) holds for all stopping times $\rho$.
Theorem~\ref{TFatou} in the Appendix~\ref{AFatou} provides a similar statement as
Proposition~\ref{PapproxofZ}, but with one limit (instead of five
limits) only, thus yielding the existence of a sequence of uniformly
integrable nonnegative martingales Fatou converging to the
$P$-supermartingale $Z$.

Now observe that if there exists a deterministic time $T>0$ such that
$Z_{T+t} = Z_T$ for all $t \ge0$, and if we set $Z_\infty= \lim_{t
\uparrow\infty} Z_t = Z_T$, then by construction of the martingales
$L^{(i,j,k,m,n)}$, the convergence in (\ref{eq59convergence}) extends
to general stopping times, not necessarily finite. This allows us to
approximate extended $P$-supermartingales by an additional limit procedure.

%
\begin{cor}\label{capproxofextendedZ}
Under Assumption~\ref{assB}, let $(\overline{Z}_t)_{t \in
[0,\infty]}$ be an extended nonnegative $P$-supermartingale with $\E
_P[\overline{Z}_0] = 1$. There exists a family\break  $(\overline
{L}{}^{(h,i,j,k,m,n)})_{h,i,j,k,m,n \in\N}$ of uniformly integrable
nonnegative $P$-martingales with $\overline{L}{}^{(h,i,j,k,m,n)} =
(\overline{L}{}^{(h,i,j,k,m,n)}_t)_{t \in[0,\infty]}$ and $\E
_P[\overline{L}{}^{(h,i,j,k,m,n)}_0]=1$ for all\break  $h,i,j,k,m,n \in\N$,
such that almost surely
%
\begin{equation}
\label{eqextendedsupermartingaleapprox} \lim_{h\uparrow\infty} \lim_{i \uparrow\infty} \lim
_{j\uparrow
\infty} \lim_{k \uparrow\infty} \lim_{m \uparrow\infty}
\lim_{n
\uparrow\infty} \overline{L}{}^{(h,i,j,k,m,n)}_\rho=
\overline {Z}_\rho
\end{equation}
for all stopping times $\rho$.
\end{cor}

\begin{pf*}{Proof of Corollary~\ref{capproxofextendedZ}}
For fixed $h \in\N$, consider the family\break  $(\widetilde
{L}{}^{(h,i,j,k,m,n)})_{i,j,k,m,n \in\N}$ of uniformly integrable
nonnegative $P$-martingales with $\E[\widetilde
{L}{}^{(h,i,j,k,m,n)}_0] = 1$, which is given by Proposition~\ref
{PapproxofZ} and which approximates the $P$-super\-martingale
\[
\widetilde{Z}{}^{(h)}_t = \1_{t < h}
\frac{Z_t - \E_P[\overline
{Z}_\infty\mid \F_t]}{\E_P[Z_0 - \overline{Z}_\infty]}
\]
for all $t \geq0$, where we set $0/0=1$. As remarked above, the
convergence in (\ref{eq59convergence}) extends to general stopping
times if we set $\widetilde{Z}{}^{(h)}_\infty= \widetilde{Z}{}^{(h)}_h =
0$. Since $\E_P[\overline{Z}_\infty\mid \F_\infty] = \overline
{Z}_\infty$, it now suffices to set
\[
\overline{L}{}^{(h,i,j,k,m,n)}_t = \E_P[
\overline{Z}_\infty\mid \F_t] + \E_P[Z_0
- \overline{Z}_\infty] \widetilde{L}{}^{(h,i,j,k,m,n)}_t
\]
for all $t \in[0,\infty]$ and all $h,i,j,k,m,n \in\N$.
\end{pf*}

\subsubsection{Proofs of Theorems~\texorpdfstring{\protect\ref{Tapprox}}{3.7} and \texorpdfstring{\protect\ref{Tapprox2}}{3.9}, existence}\label{sec5.1.3}

With the help of the auxiliary results of the last two subsections, the
construction of F\"ollmer finitely additive measures is now simple. We
start by using the Radon--Nikodym derivatives
$(L^{(i,j,k,m,n)})_{i,j,k,m,n \in\N}$ from Proposition~\ref
{PapproxofZ} to construct a family of probability measures. 
Applying Corollary~\ref{cconstructionoffinitelyadditivefoellmer} then
five times yields the existence of a F\"ollmer finitely additive
measure for the $P$-super\-martingale $Z$.

In the same manner, Corollary~\ref{capproxofextendedZ} implies the
existence of an extended F\"ollmer finitely additive measure for the
extended $P$-supermartingale $\overline{Z}$.

\subsection{F\"ollmer finitely additive measure: Proof of nonuniqueness}\label{sec5.2} \label{SSnonuniqueness}

Before we get to the question of uniqueness, let us first illustrate
that a F\"ollmer finitely additive measure needs to satisfy (\ref
{eqQSing}) for all stopping times, it does not suffice to verify (\ref
{eqQSing}) only for deterministic times.

%
\begin{ex}\label
{excounterexampledeterministictimesimplystoppingtimesfinitelyadditive}
Let $(\Omega,\F,P)$ be a probability space that supports a Brownian
motion $W = (W_t)_{t \geq0}$ and an independent random variable $\rho
$ with uniform distribution on $[1,2]$. Define a filtration $(\F_t)_{t
\geq0}$ by $\F_t = \bigcap_{s>t} (\sigma(W_r\dvtx\break   r \le s) \vee\sigma
(\rho))$ for all $t \ge0$. Since $\rho$ and $W$ are independent, $W$
is a Brownian motion in the filtration $(\F_t)_{t \ge0}$. Moreover,
$\rho$ is $\F_0$-measurable and therefore a stopping time. Define
now the $P$-supermartingale $Z = (Z_t)_{t \geq0}$ by $Z_t = \1
_{[0,\rho)}(t)$ for all $t \ge0$.

In the same way as in Example~\ref
{exonejumpsupermartingaleFatoulimit}, we can now construct two
sequences $(M^{(m)})_{m \in\N}$ and $(N^{(m)})_{m \in\N}$ of
continuous, nonnegative local martingales with $M^{(m)} =
(M^{(m)}_t)_{t \geq0}$ and $N^{(m)} = (N^{(m)}_t)_{t \geq0}$ for all
$m \in\N$. Toward this end, note that $\rho-1/m$ is a stopping time
for each $m \in\N$. Then, for each $m \in\N$, let $M^{(m)}$ be a
local martingale that is constant $1$ until time $\rho-1/m$,
fluctuates in the interval $[\rho-1/m,\rho]$, and is constant $0$
after time $\rho$, and let
$N^{(m)}$ be a local martingale that is constant $1$ until time $\rho
$, fluctuates in the interval $[\rho,\rho+1/m]$, and is constant $0$
after time $\rho+1/m$.

Since $\rho$ is absolutely continuous, we have almost surely $\lim_{m
\uparrow\infty} M^{(m)}_t = Z_t = \lim_{m \uparrow\infty}
N^{(m)}_t$ for all $t \ge0$. So if $Q_1$ is a cluster point of the F\"
ollmer finitely additive measures for $(M^{(m)})_{m \in\N}$, and if\vspace*{1pt}
$Q_2$ is a cluster point for the F\"ollmer finitely additive measures
for $(N^{(m)})_{m \in\N}$, then we have  %
\[
(Q_1\mid _{\F_t})^r[A] = \E_P[
\1_A Z_t] = (Q_2\mid _{\F_t})^r[A]
\]
for all $A \in\F_t$ and $t \geq0$. However, we have $\lim_{m
\uparrow\infty} M^{(m)}_\rho= 0 \neq1 =\break  \lim_{m \uparrow\infty}
N^{(m)}_\rho$ and, therefore, $(Q_1\mid _{\F_\rho})^r \neq(Q_2\mid _{\F
_\rho})^r$.
\end{ex}

In order to prove the (non)uniqueness results of Theorems~\ref
{Tapprox} and \ref{Tapprox2}, we first prove some important special
cases in auxiliary lemmas. We shall use the convention $Z_\infty= \lim_{t \uparrow\infty} Z_t$, but warn the reader that it is possible
that $\overline{Z}_\infty\neq Z_\infty$; however, we always have
$\overline{Z}_\infty\in[0,Z_\infty]$.

%
\begin{lem} \label{1111}
Under Assumption~\ref{assB}, suppose that $P[Z_\infty>0]>0$.
Then there exist two F\"ollmer finitely additive measures $Q_1, Q_2$
for the $P$-super\-martin\-gale ${Z}$.
\end{lem}

\begin{pf}
Observe that under the assumption the extension $\overline{Z}$ of the
\mbox{$P$-}super\-martingale $Z$ is not unique; for example, we may set
$\overline{Z}_\infty= 0$ or $\overline{Z}_\infty= Z_\infty$.
However, for each extension there exists an extended F\"ollmer finitely
additive measure, which is also a F\"ollmer finitely additive measure
for the $P$-supermartingale $Z$. Since the measures corresponding to
different extensions of $Z$ do not agree, the statement is proven.
\end{pf}

The proof of Lemma~\ref{1111} is short but not very
insightful. We thus provide an alternative, more ``constructive'' proof
in Appendix~\ref{Aalternative} to illustrate where the lack of
uniqueness comes into play.

%
\begin{lem} \label{LnonUniqueness2}
Under Assumption~\ref{assB}, suppose that there exists $c \in
[0,1)$ for which $P[\rho< \infty] = 1$, where $\rho= \inf\{t \ge0\dvtx
Z_t \leq c\}$. Then there exist two extended F\"ollmer finitely
additive measures $Q_1, Q_2$ for the extended $P$-supermartingale
$\overline{Z}$.
\end{lem}

\begin{pf}
Recall the family $(\overline{L}{}^{(h,i,j,k,m,n)})_{h,i,j,k,m,n \in\N
}$ of uniformly integrable nonnegative martingales from Corollary~\ref
{capproxofextendedZ}.
For sake of notation, fix $h,i,j,k,m,n \in\N$ and set
$\widehat{L} = \overline{L}{}^{(h,i,j,k,m,n)}$. Define the stopping
time $\widehat{\sigma}$
by $\rho$ on the event $\{\widehat{L}_\rho> (1+c)/2\}$ and $\infty$
on its complement. Note that the convergence in (\ref
{eq59convergence}) implies that, for $P$-almost all $\omega\in\Omega
$, there exists $h^*(\omega)$, such that for all $h \ge h^*(\omega)$
there exists $i^*(\omega,h)$, such that for all $i \ge i^*(\omega,h)$
there exists $j^*(\omega,h,i), \ldots,$ such that ${\widehat{\sigma
}}(\omega) = \infty$ as long as $h \ge h^*(\omega), i \ge i^*(\omega
,h),\ldots, n \ge n^*(\omega,h,i,j,k,m)$.

Define now the events $(B_l)_{l \in\N}$ by
\[
B_{l} =\bigl\{ W_{\rho+ 1} \in (l,l+1) \bigr\}
\]
and note that $\E_P[\1_{B_l}\mid \F_{\rho}] > 0$ almost surely using the
fact that $\rho<\infty$.
For each $l \in\N$, consider the right-continuous, uniformly
integrable $P$-martingale $\widehat{L}{}^{(l)}$ with $\widehat
{L}{}^{(l)} = (\widehat{L}{}^{(l)})_{t \geq0}$, defined by
\[
\widehat{L}{}^{(l)}_\infty= \widehat{L}_{\widehat{\sigma}} \biggl(\1
_{\{\widehat{\sigma}= \infty\}} + \1_{\{\widehat{\sigma} = \rho\}
} \frac{\1_{B_l}}{\E_P [\1_{B_l}\mid \F_{\rho} ]} \biggr).
\]
Note that, for each fixed $l \in\N$, (\ref
{eqextendedsupermartingaleapprox}) holds with $\widehat{L}$ replaced
by $\widehat{L}{}^{(l)}$, for each $h,i,j,k,m,n \in\N$.
Thus, for each $l \in\N$, we obtain an extended F\"ollmer finitely
additive measure $Q^{(l)}$ for the extended $P$-supermartingale
$\overline{Z}$; see the proof of the existence statement of
Theorem~\ref{Tapprox2}.

Now, for each $l \in\N$, we have
\begin{eqnarray*}
\E_P \bigl[\widehat{L}{}^{(l)}_\infty
\1_{B_l} \bigr] &\geq& \E_P \biggl[\widehat{L}_{\widehat{\sigma}}
\1_{\{\widehat{\sigma} = \rho\}} \frac{\1_{B_l}}{\E_P [\1_{B_l}\mid \F_{\rho} ]} \biggr] = \E _P [
\widehat{L}_{\widehat{\sigma}} \1_{\{\widehat{\sigma} =
\rho\}} ]
\\
&=& 1 - \E_P [\widehat{L}_{\rho} \1_{\{\widehat{\sigma} > \rho
\}} ] \geq1
- \frac{1+c}{2} = \frac{1-c}{2} > 0;
\end{eqnarray*}
thus, we also have $Q^{(l)}[B_l] \geq(1-c)/2$.
Since the events $(B_l)_{l \in\N}$ are disjoint, there must exist
more than one extended F\"ollmer finitely additive measure for the
extended $P$-supermartingale $\overline{Z}$.
\end{pf}

The previous two lemmas now yield the proof of the nonuniqueness
assertion of Theorem~\ref{Tapprox}.

First, note that the $P$-supermartingale $Z$ always satisfies one (or
both) of the following two conditions:
\begin{longlist}[(A)]
\item[(A)] $P[Z_\infty> 0] > 0$;
\item[(B)] $P[\rho< \infty] = 1$, where $\rho= \inf\{t \ge0\dvtx  Z_t
\leq1/2\}$.
\end{longlist}
That is, either the $P$-supermartingale $Z$ has positive probability
to have a positive limit or it crosses $1/2$ almost surely in finite
time, or both. Then Lemmas~\ref{1111} and~\ref{LnonUniqueness2} yield the nonuniqueness, in both of those cases.

The corresponding statement of Theorem~\ref{Tapprox2} needs one more
lemma.%

\begin{lem} \label{LnonUniqueness3}
Under Assumption~\ref{assB}, there exist two extended F\"ollmer
finitely additive measures $Q_1, Q_2$ for the extended
$P$-supermartingale $\overline{Z}=(\1_{[0,\infty)(t)})_{t \in
[0,\infty]}$ such that
$Q_1 \neq Q_2$.
\end{lem}

\begin{pf}
Consider the $P$-local martingale $G=(G_t)_{t \geq0}$, defined by
$G_t = \int_0^t \exp(-s) \,\dd W_s$, and note that if we set $G_\infty
= \lim_{t \uparrow\infty} G_t$, then for every $t \ge0$ the random
variable $G_\infty- G_t$ is normally distributed with nontrivial
variance, and independent of $\F_t$. In particular, the two disjoint
events $A^{(+)} = \{G_\infty>0\}$ and $A^{(-)}= \{G_\infty<0\}$
satisfy $\E_P[A^{(\mathcal{y})}\mid \F_n] > 0$ almost surely for
$\mathcal{y} \in\{-,+\}$ and for all $n \in\N$.

We now construct, ``by hand,'' two sequences $(L^{(+,n)})_{n \in\N},
(L^{(-,n)})_{n \in\N}$ of nonnegative uniformly integrable
martingales with $L^{(\mathcal{y},n)} = (L^{(\mathcal{y},n)}_t)_{t
\geq0}$ such that $\lim_{n \uparrow\infty} L^{(\mathcal
{y},n)}_\rho= \1_{\{\rho<\infty\}}$ and that $\E_P[L^{(\mathcal
{y},n)}_\infty\1_{A^{(\mathcal{y})}}] = 1$ for $\mathcal{y} \in\{
-,+\}$ and\vspace*{1pt} for all $n \in\N$. This then shows the statement, by the
same arguments in the proof of the existence statement of Theorem~\ref
{Tapprox}.

To construct such sequences of nonnegative uniformly integrable
martingales, fix $\mathcal{y} \in\{-,+\}$ and let $\mathcal{E}^{(n)}
= (\mathcal{E}^{(n)}_t)_{t \geq0}$ be a continuous nonnegative local
martingale that stays constant at one up to time $n-1$ and is zero
almost surely at time $n$ for each $n \in\N$; for instance, such a
local martingale can easily be obtained by modifying the process given
in Example~\ref{exonejumpsupermartingaleFatoulimit}. Now, let $\rho
_n$ denote the first hitting time of $2^n$ by $\mathcal{E}^{(n)} $ for
each $n$. The Borel--Cantelli lemma yields that $\sum_{n = 1}^\infty\1
_{\{\rho_n<\infty\}} < \infty$ almost surely. Now, for each $n \in
\N$ define the random variable
\[
L^{(\mathcal{y},n)}_\infty= \mathcal{E}^{(n)}_{\rho_n \wedge_n}
\frac{\1_{A^{(\mathcal{y})}}}{\E_P[A^{(\mathcal{y})}\mid \F_n] },
\]
note\vspace*{1pt} that $\E_P[L^{(\mathcal{y},n)}_\infty] = 1$, and define the
uniformly integrable martingale $L^{(\mathcal{y},n)}$ as the
right-continuous modification of the process $(\E_P[L_\infty
^{(\mathcal{y},n)}\mid \F_t])_{t \geq0}$. It is simple to see that both
sequences $(L^{(+,n)})_{n \in\N}, (L^{(-,n)})_{n \in\N}$ of
nonnegative uniformly integrable martingales, constructed in this way,
satisfy the claimed conditions, which completes the proof.
\end{pf}

We are now ready to prove the uniqueness claims of Theorem~\ref{Tapprox2}.

It is clear that the extended F\"ollmer finitely additive measure is
unique whenever the $P$-supermartingale $Z$ is a uniformly integrable
martingale, $\E_P[\overline{Z}_\infty] = 1$, and $\F=\F_\infty$.
Thus, let us now assume that $\E_P[\overline{Z}_\infty] < 1$.

To make headway, write $\overline{Z}$ as the sum of a uniformly
integrable $P$-martingale $N=(N_t)_{t \in[0,\infty]}$ and an
extended $P$-supermartingale $G = \break (G_t)_{t \in[0,\infty
]}$; here $N$ is just the right-continuous modification of the\vspace*{1pt} process
$(\E_{P} [\overline{Z}_\infty\mid \F_t])_{t \geq0}$. Next, note that
there exist two extended F\"ollmer finitely additive measures $Q^N$ and
$Q^G$ corresponding to the two $P$-super\-martingales $N/\E_P[N_0]$
(if $\E_P[N_0]>0$, otherwise just use the null measure) and $G/\E_P[G_0]$.
An extended F\"ollmer finitely additive measure $Q$ for the extended
$P$-super\-martingale $\overline{Z}$ can then be constructed by
setting $Q = \E_P[N_0] Q^N + \E_P[G_0] Q^G$.
Thus, to show nonuniqueness of the extended F\"ollmer finitely additive
measure $Q$ for $\overline{Z}$, it is sufficient to show nonuniqueness
of the extended F\"ollmer finitely additive measure $Q^G$ for the
extended \mbox{$P$-}supermartingale $G =\break  (G_t)_{t \in[0,\infty]}$. For sake\vspace*{1pt}
of notation, we thus assume from now on that $\overline{Z} \equiv G$;
that is, that $\overline{Z}_\infty= 0$.

We now first consider the case that $Z$ is a $P$-uniformly integrable
martingale. Then there exists a (countably additive) probability
measure $P'$, defined by $P'(\dd\omega) = Z_\infty(\omega) P(\dd
\omega)$. For the extended $P'$-super\-martingale $(1_{[0,\infty
)}(t))_{t \in[0,\infty]}$ there exist two different extended F\"
ollmer finitely additive measures $Q_1$ and $Q_2$ according to
Lemma~\ref{LnonUniqueness3}. However, note that $Q_1$ and $Q_2$ are
also extended F\"ollmer finitely additive measures for the extended
$P$-super\-martingale $\overline{Z}$.

We next consider the case that $Z$ is not a $P$-uniformly integrable
martingale. In particular, the extended $P$-supermartingale $\overline
{Z}$ can be written as a sum of a uniformly integrable martingale and a
nonzero potential. With the same arguments as above, in order to show
nonuniqueness, we may assume, without loss of generality, that
$\overline{Z}$ is a potential; that is, in particular, that $Z_\infty
= 0 = \overline{Z}_\infty$. However, then Lemma~\ref
{LnonUniqueness2} yields the nonuniqueness of the extended F\"ollmer
finitely additive measure and the proof is complete.

\begin{appendix}
\section{Incomplete filtrations}\label{sec6}\label{Afiltration}
To dispel possible concerns about the fact that we are working with
incomplete filtrations, here we collect some observations which allow
us to transfer results from complete filtrations to our setting. Note
that there are at least two important monographs which avoid the use of
complete filtrations as far as possible, \citet{jacodbook}
and~\citet{JacodS}.

Let $(\Omega, \F, (\F_t)_{t \ge0}, P)$ be a filtered probability
space with a right-continuous filtration $(\F_t)_{t \geq0}$. Write
$\F^P$ for the $P$-completion of $\F$, and $\mathcal{N}^P$ for the
$P$-null sets of $\F^P$. Then the filtration $(\F_t^P)_{t \geq0} =
(\F_t \vee\mathcal{N}^P)_{t \geq0}$ satisfies the usual conditions.
For every random variable $X$ on $(\Omega, \F^P)$, there exists a
random variable $Y$ on $(\Omega, \F)$ with $P[X=Y]=1$.

The first result relates stopping times under $(\F_t)_{t \geq0}$ and
under $(\F^P_t)_{t \geq0}$.

%
\begin{lem}[{[\citet{JacodS}, Lemma~I.1.19]}]\label{lcompletestoppingtime}
Any stopping time on $(\Omega, \F^P, (\F^P_t)_{t \geq0})$ is almost
surely equal to a stopping time on $(\Omega, \F, (\F_t)_{t \geq0})$.
\end{lem}

Comparable results hold on the level of processes.

%
\begin{lem}\label{lcompletepredictableoptional}
Any predictable (resp., optional) process on the completion
$(\Omega, \F^P, (\F^P_t)_{t \geq0})$ is indistinguishable from a
predictable (resp., optional) process on $(\Omega, \F, (\F
_t)_{t \geq0})$.
\end{lem}

\begin{pf}
The predictable case is Lemma~I.2.17 of \citet{JacodS}. The
optional case is shown in the same way.
\end{pf}

%
\begin{lem}\label{lcompletecadlag}
Let $G = (G_t)_{t \geq0}$ be an $(\F^P_t)_{t \geq0}$-adapted
process that it almost surely c\`adl\`ag. Then $G$ is indistinguishable
from an $(\F_t)_{t \geq0}$-adapted process $\widetilde G=
(\widetilde{G}_t)_{t \geq0}$ which is right-continuous for \emph
{all} $\omega\in\Omega$ and which possesses left limits everywhere\vspace*{1pt}
except at a stopping time $\tau$ with $P[\tau=\infty] = 1$. If $G$
is almost surely nondecreasing and bounded from above, then $\widetilde
G$ can be chosen nondecreasing and bounded from above for all $\omega
\in\Omega$.
\end{lem}

\begin{pf}
For each $q \in\Q\cap[0,\infty)$, consider an $\F_q$-measurable
random variable $\overline G_q$ with $P[\overline G_q = G_q] = 1$. Then
there exists a nondecreasing sequence $(\mathcal N_t)_{t \geq0}$ of
null sets with $\mathcal N_t \in\F_t$ such that the process
$(\overline G_q)_{q \in\Q\cap[0,t]}$ has left and right limits for
all $\omega\in\Omega\setminus\mathcal N_t$; see, for example,
page~59 in~\citet{EKMarkovian} for details. We then define the
stopping time $\tau(\omega) = \inf\{t \ge0\dvtx  \omega\in\mathcal
N_t\}$ and take $\widetilde G$ as the right limit process of
$(\overline G^\tau_q)_{q \in\Q\cap[0,\infty)}$. If $G$ is almost
surely nondecreasing, define
\[
\mathcal M_t = \bigl\{\omega\in\Omega\dvtx  \exists 0\leq q <
q'\leq t \in \Q\mbox{ such that } \overline G_q(
\omega) > \overline G_{q'}(\omega )\bigr\} %
\]
for all $t \geq0$ and note that $\mathcal M_t \in\F_t$ is a
$P$-null set.
We\vspace*{1pt} now define the stopping time $\tau$ as before, but now with
$\mathcal N_t \cup\mathcal M_t$ replacing $\mathcal N_t$, for all $t
\geq0$, and set $\widetilde G_t = (G_t \wedge K) \1_{\{\tau> t\}} + K
\1_{\{\tau\leq t\}}$ for all $t \geq0$, where $K$ is an almost sure
upper bound of the process $G$.
\end{pf}

\section{Multiplicative decomposition of~a~supermartingale}\label{sec7} \label{Amultiplicative}
In this \hyperref[sec7]{Appendix}, we discuss the multiplicative decomposition of a
nonnegative supermartingale.
For the nonnegative $P$-supermartingale $Z$, we shall write
${}^{p}{Z}$ to denote its predictable projection, which is the unique
predictable process that is characterized by the identity
${}^{p}{Z}_\rho= \E_P[{}^{p}{Z}_\rho\mid  \F_{\rho-}]$ on the event $\{
\rho<\infty\}$ for all predictable stopping times $\rho$; see also
Theorem~1.2.28 in \citet{JacodS}.

Let $\rho_0$ denote the first hitting time of zero by the
$P$-supermartingale $Z$. According to Th\'eor\`eme~1 in \citet
{Jacod1978} there exists a nondecreasing sequence of finite stopping
times $(\rho_n)_{n \in\N}$ such that $(Z_{\rho_n-}) \wedge
({}^{p}{Z}_{\rho_n}) \geq1/n$ and $\lim_{n \uparrow\infty} \rho_n
= \rho_0$. Define\vspace*{1pt} the event $B=\bigcup_{n \in\N} \{\rho_n = \rho
_0\}$ and denote its complement by $B^c$.
This allows us to write $\rho_0 = \rho_0^P \wedge\rho_0^S$ for the
two stopping times $\rho_0^P = \rho_0 \1_{B^c} + \infty\1_{B^c}$ and
$\rho_0^S = \rho_0 \1_{B} + \infty\1_{B}$ It is clear that the
stopping time $\rho_0^P$ is predictable, announced by the sequence
$(\rho_n + n \1_{\{\rho_n = \rho_0\}})_{n \in\N}$.

To obtain some intuition, note that $\rho_0^P$ is finite if and only
if either one of two events occurs: either $Z$ hits zero continuously,
that is, for each $n \in\N$ it crosses the level $1/n$ strictly
before it hits zero, or $Z$ jumps to zero, but with an announced jump.
On the other side, $\rho_0^S$ is the time when $Z$ jumps to zero ``by
surprise;'' by which we mean that one has not almost sure knowledge
about that jump just before it occurs.

We are now ready to state an existence and uniqueness result for a
multiplicative decomposition of the $P$-supermartingale $Z$.

\begin{prop}[{[\citet{Yoeurp1976}, Th\'eor\`eme~3.9]}]\label{pmultdec}
There exist a nonnegative local martingale $M = (M_t)_{t \ge0}$ and a
nonnegative, nonincreasing and predictable process $D = (D_t)_{t \geq
0}$ with $D_0 = 1$ and c\`adl\`ag paths such that:
\begin{itemize}
\item$Z \equiv \MD$;
\item$M \equiv M^{\rho_0}$ and $D \equiv D^{\rho_0}$;
\item$D_{\rho_0^P} = 0$ on the event $\{\rho_0^P < \infty\}$;
\item$M$ is continuous at time $\rho_0^P$ on the event $\{\rho_0^P <
\infty\}$.
\end{itemize}
These properties determine the processes $M$ and $D$ uniquely up to
indistinguishability.
\end{prop}

In the setting of the last proposition, $D^{\rho_0}$ is a predictable
process despite the fact that $\rho_0$ is in general not a predictable
time; see Proposition~I.2.4 in~\citet{JacodS}. Thus, assuming
that $D \equiv D^{\rho_0}$ does not lead to problems.

\begin{pf*}{Proof of Proposition~\ref{pmultdec}}
The assertion follows from Th\'eor\`eme~3.9 in \citet{Yoeurp1976}
and Corollaire~8 in \citet{Jacod1978}, which yield the existence
of two nonnegative processes $N$ and $H$ with c\`adl\`ag paths, such
that $N$ is a local martingale on the stochastic interval $[0, \rho
_0^P)$ and $H$ is a nonincreasing and predictable process such that $Z
\equiv N H$ holds.
Note that we may replace the process $H$ by $D = (H_t \1_{\{\rho
_0^P>t\}})_{t \geq0}$, which is also a predictable process; see, for
example, Theorems~2.15.a and 2.28.c in \citet{JacodS}. It is easy
to check that we still have the decomposition $Z = ND$.

Next, an application of Proposition~A.4 in \citet{CFR2011} allows
us to extend $N$ to a local martingale $M$ on the whole positive half
line such that $M \equiv M^{\rho_0^P}$ and $M$ is continuous at time
$\rho_0^P$. We still have the decomposition $Z = \MD$ since $Z_{\rho
_0^P} = 0 = D_{\rho_o^P}$ on the event ${\rho_0^P<\infty}$.
Moreover, note that we may assume, without loss of generality, that $M
\equiv M^{\rho_0}$ and $D \equiv D^{\rho_0}$.

To see the asserted uniqueness, consider two processes $M'$ and $D'$ as
in the theorem. Then, Corollaire~8 in \citet{Jacod1978} yields
that $M' \equiv M$ on $[0, \rho_0^P)$. By the required continuity of
$M'$ at time $\rho_0^P$ we obtain that $M' \equiv M$. Corollaire~8 in
\citet{Jacod1978} also yields that $D' \equiv D$ on $[0, \rho
_0)$ and that $D'_{\rho_0^S} = D_{\rho_0^S}$ on the event $\{\rho
_0^S < \infty\}$. Thus, $D' \equiv D$ follows from the assumption that
$D_{\rho_0^P} = 0$ and that
$D' \equiv D'^{\rho_0}$.
\end{pf*}

We remark that \citet{Yoeurp1976} does not mention a condition
that corresponds to $D \equiv D^{\rho_0}$ in the formulation of Th\'
eor\`eme~3.9. However, simple counterexamples illustrate that such a
condition is needed to obtain uniqueness of the processes $M$ and $D$.
Both \citet{Yoeurp1976} and \citet{Jacod1978} assume that
the filtration $(\F_t)_{t \ge0}$ satisfies the usual conditions.
However, using the observations made in Appendix~\ref{Afiltration}, we
can easily dispense with that assumption.

For another multiplicative decomposition of a given nonnegative
supermartingale, see also Theorem~4.2 and Remark~4.5 in \citet
{Penner2013}. In that decomposition, however, the nonincreasing process
$D$ is not necessarily predictable.

\section{Certain spaces in measure theory}\label{sec8} \label{Ameasure}
In this \hyperref[sec8]{Appendix}, we recall some measure-theoretic concepts.

Let $(X, \X)$ and $(Y, \Y)$ be two measurable spaces. A bijection
$f\colon X \rightarrow Y$ is called \emph{isomorphism between} $(X,\X
)$ \emph{and} $(Y,\Y)$ if both $f$ and $f^{-1}$ are measurable. The spaces
$(X, \X)$ and $(Y, \Y)$ are called \emph{isomorphic} if there exists
an isomorphism between them. A bijection $\varphi\colon\X\rightarrow
\Y$ is called \emph{$\sigma$-isomorphism} if it preserves countable
set operations, that is, if $\varphi(\bigcup_{n \in\N} A_n) =
\bigcup_{n\in\N} \varphi(A_n)$ and $\varphi(\bigcap_{n \in\N}
A_n) = \bigcap_{n \in\N} \varphi(A_n)$ for each sequence $(A_n)_{n
\in\N}$ with $A_n \in\X$ for all $n \in\N$. The sigma algebras
$\X$ and $\Y$ are called \emph{$\sigma$-isomorphic} if there
exists a \mbox{$\sigma$-}isomorphism between them. Note that if the function
$f\colon X \rightarrow Y$ is an isomorphism between the measurable
spaces $(X,\X)$ and $(Y,\Y)$ then the mapping $\X\rightarrow\Y$
with $A \mapsto\{f(x)\dvtx  x \in A\}$ is a $\sigma$-isomorphism between
$\X$ and $\Y$.

A measurable space $(X,\X)$ is called \emph{countably generated} if
there exists a sequence $(A_n)_{n \in\N}$ in $\X$, such that $\X=
\sigma(A_n\dvtx  n \in\N)$. If $X$ is a separable metrizable space, then
its Borel sigma algebra $\B(X)$ is countably generated: if $B_r(x)$
denotes the open ball around $x\in X$ with radius $r \geq0$ with
respect to a metric that generates the topology, and if $(x_n)_{n \in
\N}$ is a countable dense subset, then $\{B_q(x_n)\dvtx  n \in\N, q \in
\Q, q \ge0\}$ is a countable base for the topology, and, in
particular, generates $\B(X)$.

%
\begin{defn}[{[\citet{Pa}, Definition~V.2.2]}] \label{Dstandard}
A measurable space $(X, \X)$ is called \emph{standard Borel space} if
there exists a Polish space $Y$ such that $\X$ is $\sigma
$-isomorphic to $\B(Y)$, where $\B(Y)$ denotes the Borel sigma
algebra of~$Y$.
\end{defn}

%
\begin{lem}
Any standard Borel space is countably generated.
\end{lem}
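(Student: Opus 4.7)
The plan is to chain together two facts already established in the excerpt: that a Polish space has a countably generated Borel $\sigma$-algebra, and that being countably generated is an invariant of the $\sigma$-isomorphism relation on $\sigma$-algebras. By Definition~\ref{D:standard}, given a standard Borel space $(X, \X)$, there exists a Polish space $Y$ together with a $\sigma$-isomorphism $\varphi\colon \X \to \B(Y)$. The paper has just observed that, since $Y$ is separable and metrizable, the collection $\{B_q(x_n) : n \in \N, q \in \Q, q \geq 0\}$ generates $\B(Y)$; enumerate these balls as $(B_n)_{n \in \N}$ so that $\B(Y) = \sigma(B_n : n \in \N)$.

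Next, I would transport this countable generator through $\varphi$. Define the sub-$\sigma$-algebra $\mathcal{G} = \sigma(\varphi^{-1}(B_n) : n \in \N) \subset \X$ and claim that $\mathcal{G} = \X$. To see this, consider $\varphi(\mathcal{G}) \subset \B(Y)$. Because $\varphi$ preserves countable unions and intersections, and because (as a bijection of $\sigma$-algebras preserving those operations) it also preserves complements, $\varphi(\mathcal{G})$ is itself a $\sigma$-algebra of subsets of $Y$. It contains each $B_n = \varphi(\varphi^{-1}(B_n))$, so $\varphi(\mathcal{G}) \supset \sigma(B_n : n \in \N) = \B(Y)$. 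Bijectivity of $\varphi$ then forces $\mathcal{G} = \X$, proving the result.

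The step that merits the most care is the verification that $\varphi$ preserves complements, which is not part of the bare definition of $\sigma$-isomorphism given in the excerpt. This follows from the lattice-theoretic characterization: the top element $X$ of $\X$ is the unique element $T$ satisfying $T \cap A = A$ for all $A \in \X$, and analogously for $\emptyset$; since $\varphi$ preserves the meet and join operations, it sends top to top and bottom to bottom, and then the defining relations $A \cup A^c = X$, $A \cap A^c = \emptyset$ force $\varphi(A^c) = \varphi(A)^c$. With that brief observation in place, the remainder is routine.
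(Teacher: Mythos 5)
Your proof is correct and follows the same route as the paper: pull back a countable generating family of $\B(Y)$ through the $\sigma$-isomorphism $\varphi$ and check that the preimages generate $\X$. The paper simply asserts the identity $\X = \sigma(\varphi^{-1}(A_n): n\in\N)$, whereas you supply the (correct) justification, including the observation that a bijection preserving countable unions and intersections automatically preserves complements.
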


\begin{pf}
Let $(X, \X)$ denote a standard Borel space and $(Y,\B(Y))$ the
corresponding Polish space of Definition~\ref{Dstandard}. As we
remarked above, $\B(Y)$ is countably generated.
Let $(A_n)_{n \in\N}$ generate $\B(Y)$, let $\varphi\colon\X
\rightarrow\B(Y)$ denote a $\sigma$-isomorphism, and note that
\[
\X= \sigma \bigl(\bigl\{\varphi^{-1}(A_n)\dvtx  n \in\N\bigr\}
\bigr)
\]
holds, which proves the statement.
\end{pf}

%
\begin{defn} \label{DLusin}
A \emph{Lusin space} is a topological space $E$ for which there exists
a Polish space $Y$ and a continuous bijection $f\colon Y \rightarrow
E$. A \emph{state space} is a metrizable Lusin space.
\end{defn}

It can easily be seen that $E$ is a Lusin space if and only if there
exists a finer topology on $E$ under which $E$ becomes Polish. Each
Polish space is a Lusin space and a state space. An example of a state
space that is not Polish is the set $\Q\subset\R$ of rational
numbers, equipped with the Euclidean metric. For the corresponding
Polish space we may choose $Y = \Q$, equipped with the discrete topology.

We also need the notion of a standard system, introduced by F\"ollmer.
Recall that if $\X$ is a sigma algebra, then a set $A \in\X$ is
called \emph{atom} in $\X$ if $B \in\X$ and $B \subset A$ implies
$B = \varnothing$ or $B = A$.

%
\begin{defn}[{[\citet{F1972}, Appendix]}]
Let $X$ be a set and let $\mathcal{T}$ be a partially ordered nonempty
index set. Assume that $(\X_t)_{t \in\mathcal{T}}$ is an
nondecreasing sequence of sigma algebras on $X$. The ``filtration''
$(\X_t)_{t \in\mathcal{T}}$ is called \emph{standard system} if it
satisfies the following conditions:
\begin{longlist}[(ii)]
\item[(i)] The space $(X, \X_t)$ is a standard Borel space for each
$t \in\mathcal{T}$.
\item[(ii)] If $(t_n)_{n \in\N}$ is a nondecreasing sequence in
$\mathcal{T}$, and if $(A_n)_{n \in\N}$ is a nonincreasing sequence
of atoms with $A_n \in\X_{t_n}$ for each $n\in\N$, then $\bigcap_{n \in\N} A_n \neq\varnothing$.
\end{longlist}
\end{defn}

The following criterion is useful for verifying whether a given
sub-sigma algebra of a standard Borel space corresponds to a standard
Borel space.

%
\begin{lem}[{[\citet{Pa}, Theorem~V.2.4]}] \label{Lcountably}
Let $(X,\X)$ be a standard Borel space, and let $\mathcal{W} \subset
\X$ be countably generated. Then $(X, \mathcal{W})$ is a standard
Borel space.
\end{lem}

Lemma~\ref{Lcountably} yields, in particular, that each state space
$E$ is a standard Borel space: first note that $E$ is separable,
because if $Y$ is a Polish space, $(y_n)_{n \in\N}$ is dense in~$Y$,
and $f\colon Y \rightarrow E$ is a continuous bijection, then
$(f(y_n))_{n \in\N}$ is dense in~$E$. Therefore, the Borel sigma
algebra $\B(E)$ of $E$ is countably generated. If now $\widetilde{\B
}(E)$ is the Borel sigma algebra corresponding to a finer topology on
$E$ under which $E$ is Polish, then $\B(E) \subset\widetilde{\B
}(E)$, and therefore $E$ is a standard Borel space according to
Lemma~\ref{Lcountably}.

\section{Extension of measures}\label{sec9} \label{Aextension}
In this \hyperref[sec9]{Appendix}, we recall the extension theorems needed to construct
F\"ollmer countably additive measures on the path space, and to prove
their (non)uniqueness. We start with the simplest extension, when we
just want to add one set to the sigma algebra.

%
\begin{defn}
Let $(X, \mathcal{X}, \mu)$ be a probability space and let $A \subset
X$. The inner and outer content $\mu_*[A] $ and $\mu^*[A] $ of the
set $A$ are defined by
%
\begin{eqnarray}\label{eqouter}
\mu_*[A] &=& \sup\bigl\{\mu[B]\dvtx  B \in\mathcal{X}, B \subset A\bigr\} = \max\bigl
\{ \mu[B]\dvtx  B \in\mathcal{X}, B \subset A\bigr\};
\nonumber\\[-8pt]\\[-8pt]\nonumber
\mu^*[A] &=& \inf\bigl\{\mu[B]\dvtx  B \in\mathcal{X}, B \supset A\bigr\} = \min\bigl
\{ \mu[B]\dvtx  B \in\mathcal{X}, B \supset A\bigr\},
\end{eqnarray}
respectively.
\end{defn}

In (\ref{eqouter}), for example, the minimum is attained since the
intersection of the events $(B_n)_{n \in\N}$ satisfying $B_n\supset
A$ and $\mu^*[B_n] \leq\mu^*[A] + 1/n$ is again in the sigma algebra
$\mathcal{X}$.

%
\begin{lem}[{[\citet{Bierlein1962}, Satz~1A]}] \label{lBierlein}
Let $(X, \mathcal{X}, \mu)$ be a probability space and let $A \subset
X$. There exists an extension $\nu$ of $\mu$ to $\mathcal{X} \vee
\sigma\{A\}$ such that $\nu[A] = \mu^*[A]$.
\end{lem}

\begin{pf}
Observe that there exists an event $\widehat{A} \in\mathcal{X}$ so
that $\widehat{A} \supset A$ and $\widehat{A}{}^c \subset A^c$ with
$\mu^*[A] = \mu[\widehat{A}]$ and $\mu_*[A^c] = \mu[\widehat
{A}{}^c]$, where we denote complements by the superscript $^c$.
Moreover, we have
\[
\mathcal{X} \vee\sigma\{A\} = \bigl\{ (A \cap B_1) \cup
\bigl(A^c \cap B_2\bigr)\dvtx  B_1,
B_2 \in\F \bigr\}.
\]
Now, for any set $(A \cap B_1) \cup(A^c \cap B_2) \in \mathcal{X}
\vee\sigma\{A\}$ define
\[
\nu \bigl[(A \cap B_1) \cup\bigl(A^c \cap
B_2\bigr) \bigr] = \mu [\widehat{A} \cap B_1 ] + \mu
\bigl[\widehat{A} {}^c \cap B_2 \bigr].
\]
It is easy to see that $\nu$ is indeed a probability measure that
extends $\mu$ and satisfies $\nu(A) = \mu^*[A]$.
\end{pf}

Next, we state Parthasarathy's extension theorem.

%
\begin{teo}[{[\citet{Pa}, Theorem~V.4.1]}]\label{tparthasaratyextension}
Let $X$ be a set equipped with a standard system $(\X_n)_{n \in\N}$.
Let $(\mu_n)_{n \in\N}$ be a consistent family of probability
measures on $(\X_n)_{n \in\N}$, that is, $\mu_{n+1}\mid _{\X_n} = \mu
_n$ for all $n \in\N$. Then there exists a unique probability measure
on $\bigvee_{n \in\N} \X_n$, such that $\mu\mid _{\X_n} = \mu_n$ for
all $n \in\N$.
\end{teo}

%
\begin{teo}\label{tlubinextension}
Let $(X, \X)$ be a standard Borel space, and let $\mathcal{W}\subset
\X$ be a countably generated sigma algebra. Let $\mu$ be a
probability measure on $\mathcal{W}$. Then there exists a probability
measure $\nu$ on $\X$, such that $\nu\mid _{\mathcal{W}} = \mu$. The
extension $\nu$ is unique if and only if the sigma algebra $\X$ is
contained in the completion, with respect to the probability measure
$\mu$, of the sigma algebra $\mathcal{W}$.
\end{teo}

\begin{pf}
Let $(Y, \B(Y))$ be a Polish space along with its Borel sigma algebra,
and let $\varphi\colon\X\rightarrow\B(Y)$ be a $\sigma
$-isomorphism between $\X$ and $\B(Y)$. Define $\G= \{ \varphi(A)\dvtx
A \in\mathcal{W}\} \subset\B(Y)$. It is not hard to check that $\G
$ is a sigma algebra, that $\G$ is countably generated, and that
$\mathcal{W} = \varphi^{-1}(\G)$. Define the measure $m = \mu\circ
\varphi^{-1}$ on $\G$. If we can extend $m$ to a measure $n$ on $\B
(Y)$, then the proof is complete, because then we can set $\nu= n
\circ\varphi$.

The existence of an extension of $m$ from $\G$ to $\B(Y)$ is shown,
for example, in Theorem~5 in \citet{Lubin1974}. The result in
\citet{Lubin1974} is formulated for Blackwell spaces rather than
Polish spaces. However, each Polish space is a Blackwell space, as
defined in \citet{Lubin1974}.

The uniqueness result follows from Theorem~2 in \citet
{Ascherl1977}. We only need to show that if $A \in\X$, and if
$\widetilde{\mu}$ is an extension of $\mu$ to $\sigma(\mathcal{W}
\cup\{A\})$, then there exists an extension $\widetilde{\nu}$ of
$\mu$ on $\X$, such that $\widetilde{\nu}\mid _{\sigma(\mathcal{W}
\cup\{A\})} = \widetilde{\mu}$. However, the existence of such an
extension is an immediate consequence of the existence result since
$\sigma(\mathcal{W} \cup\{A\})$ is again countably generated.
\end{pf}

\section{Properties of the canonical path space}\label{sec10}\label{Apathspace}

We now collect some properties of the path space $(\Omega,\F)$ of
Assumption~\ref{assP}.

%
\begin{lem}[{[\citet{F1972}, Appendix]}] \label{lCanProps}
Under Assumption~\ref{assP}, we have the following statements:
\begin{longlist}[4.]
\item[1.] The probability space $(\Omega, \F)$ is standard Borel.
\item[2.] Let $\rho$ denote a stopping time. Then the sigma algebra $\F
_{\rho-}$ is countably generated. [Recall the definition of $\F_{\rho
-}$ in (\ref{eFrho-definition}).]
\item[3.] Let $(\rho_n)_{n\in\N}$ be a nondecreasing sequence of $(\F
_t)_{t \geq0}$-stopping times. Then the filtration $(\F_{\rho
_n-})_{n \in\N}$ is a standard system.
\item[4.] The set identity $\F_{\zeta-} = \F$ holds.
\end{longlist}
\end{lem}
\begin{pf}
Meyer has shown [see page~100 in \citet{Dellacherie1969}] that
there exists a Polish space $Y$, with Borel sigma algebra $\B(Y)$,
such that $(\Omega, \F)$ is isomorphic to $(Y,\B(Y))$. This implies
the first part of the statement.

For the second statement, note that the set equality
\[
\{\rho> t\} = \bigcup_{q \in\Q\dvtx  q > t} \{\rho> q\}
\]
holds and $\F^0_t$ is countably generated for each $t \geq0$; see
also the observations after Lemma~\ref{Lcountably}. Thus, if
$(A^{(t)}_m)_{m \in\N}$ is a countable generating system for $\F
^0_t$ for each $t \geq0$, then
\[
\F_{\rho-} = \sigma \bigl(A^{(0)}_m,
A^{(q)}_m \cap\{ \rho> q \}\dvtx  q \in\Q\cap[0,\infty), m
\in\N \bigr).
\]

For the third statement, property~(i) in the definition of a standard
system follows directly from Lemma~\ref{Lcountably}. Property~(ii) is
easy to check.

For the fourth statement, it suffices to show that for each Borel set
$B \subset E \cup\{\Delta\}$ and each $t \ge0$ we have $\{\omega\dvtx
\omega(t) \in B\} \in\F_{\zeta-}$. Toward this end, note that
\[
\bigl\{\omega(t) \in B\bigr\} = \bigl(\bigl\{\omega(t) \in B\bigr\} \cap\{\zeta
\le t\} \bigr) \cup \bigl(\bigl\{\omega(t) \in B\bigr\} \cap\{\zeta> t\} \bigr).
\]
The first event on the right-hand side equals $\{\zeta\le t\}$ if $B$
contains $\Delta$, and it is the empty set otherwise. Therefore, that
event is in $\F_{\zeta-}$. The second event is in $\F_{\zeta-}$ by
definition.
\end{pf}

%
\begin{teo} \label{TC2}
Under Assumption~\ref{assP}, let $\rho$ be a stopping time and
$\mu$ a probability measure on $(\Omega, \F_{\rho-})$. Then the
measure $\mu$ can be extended to a probability measure $\nu$ on
$(\Omega, \F)$ such that $\nu\mid _{\F_{\rho-}} = \mu$.
Moreover, that extension $\nu$ is unique if and only if the set
$\{\rho< \zeta\}$ is $\mu$-negligible.
\end{teo}

\begin{pf}
Since $(\Omega, \F)$ is a standard Borel space and $\F_{\rho-}$ is
countably generated by Lemma~\ref{lCanProps}, Theorem~\ref
{tlubinextension} implies the existence of an extension $\nu$ of $\mu
$ from $\F_{\rho-}$ to $\F$.
Moreover, the extension $\nu$ is unique if and only if $\F_{\zeta-}
= \F\subset\F_{\rho-}^{\mu}$, where\vspace*{1pt} the first equality follows
from Lemma~\ref{lCanProps} and where the completion of a sigma algebra
$\X$ with respect to the probability measure $\mu$ is denoted by $\X
^\mu$.

Assume now that the set $\{\rho< \zeta\}$ is $\mu$-negligible. To
prove that $\F_{\zeta-} \subset\F_{\rho-}^{\mu}$, it suffices to
show that $A \cap\{\zeta> t\} \in\F_{\rho-}^{\mu}$ for all $A \in
\F_t$ and $t \ge0$. Toward this end, note that
\[
A \cap\{ \zeta> t\} = \bigl(A \cap\{ \zeta> t\} \cap\{\rho> t\} \bigr) \cup
\bigl( A \cap\{ \zeta> t\} \cap\{\rho\le t\} \bigr)
\]
for all $A \in\F_t$ and $t \geq0$.
The first event on the right-hand side is in $\F_{\rho-}$ since $\{
\zeta> t\} \in\F_t$ holds for all $t \geq0$. The second event on
the right-hand side is contained in the $\mu$-negligible set $\{\rho
< \zeta\}$ and, therefore, it is an element of $\F^{\mu}_{\rho-}$.

For the reverse direction, assume that the set $\{\rho< \zeta\}$ is
not $\mu$-negligible,
which implies that its outer content is strictly positive, that is,
$\mu^*[\rho< \zeta]>0$.
By Lemma~\ref{lBierlein}, there exists an extension $\widehat{\nu}$
from $\F_{\rho-}$ to $\F_{\rho-} \vee\sigma(\{\rho< \zeta\})$,
such that $\widehat{\nu}[\rho< \zeta] > 0$. Since $\F_{\rho-}
\vee\sigma(\{\rho< \zeta\})$ is countably generated, Theorem~\ref
{tlubinextension} yields an extension $\nu$ of $\widehat{\nu}$ from
$\F_{\rho-} \vee\sigma(\{\rho< \zeta\})$ to $\F$.

Next, fix a sufficiently large $n \in\N$ so that the stopping time
$\widetilde{\rho} = \rho+ 2/n$ satisfies $\nu[\widetilde{\rho} <
\zeta]>0$ and define the measurable function $\phi\colon\Omega
\rightarrow\Omega$ by $\phi(\omega)(t) = \omega(t)$ for all $t \in
[0,\widetilde{\rho})$ and $\phi(\omega)(t) = \Delta$ for all $t
\in[\widetilde{\rho},\infty)$. This mapping introduces a new
probability measure $\widetilde{\nu} = \nu\circ\phi^{-1}$, such that
$\widetilde{\nu}[\widetilde{\rho} < \zeta]=0$.

Observe furthermore that $\rho+ 1/n$ is an $(\F^0_t)_{t \geq
0}$-stopping time and that $\F_{\rho-} \subset\F^0_{\rho+
1/n}=\sigma\{\omega(t \wedge(\rho+ 1/n))\dvtx  t \ge0\}$, where the
last equality can be shown, for example, as in Lemma~1.3.3 in
\citet{SVmulti}. We conclude that $\widehat{\nu}\mid _{\F_{\rho
-}} = \nu\mid _{\F_{\rho-}}$ but $\widetilde{\nu}[\widetilde{\rho} <
\zeta] = 0 <\nu[\widetilde{\rho} < \zeta]$, and thus the extension
is not unique.
\end{pf}

%
\begin{lem}\label{lomegastaysincountablymanyx}
Under Assumption~\ref{assP}, let $\mu$ be a probability measure
on $(\Omega,\F)$. Then the set
%
\begin{equation}
\label{exwhereomegastayswithpositiveproba} A = \bigl\{x \in E\dvtx \mu\bigl[\bigl\{ \omega\dvtx  \exists t \ge0,
\varepsilon> 0,\mbox{ s.t. } \omega(s) = x\mbox{ for all } s \in[t,t+\varepsilon)\bigr\}\bigr] > 0 \bigr\}\hspace*{-25pt}
\end{equation}
is at most countable.
\end{lem}

\begin{pf}
The proof is an adaption of the arguments in Lemma~3.7.7 of \citet
{EKMarkovian}. Let $T,\varepsilon,\delta> 0$. We claim that the set
\[
A_{T, \varepsilon, \delta} = \bigl\{ x \in E\dvtx  \mu\bigl[\bigl\{\omega\dvtx  \exists t \in[0,T]\mbox{
s.t. } \omega(s) = x\mbox{ for all } s \in [t, t + \varepsilon)\bigr\}\bigr]
> \delta \bigr\}
\]
is finite. If the claim holds, then the set inclusion $A \subset
\bigcup_{n \in\N} A_{n, 1/n, 1/n}$ yields the statement.

To prove this claim, assume that there exists an infinite sequence
$(x_n)_{n \in\N}$ in $A_{T, \varepsilon, \delta}$, where $x_n \neq
x_m$ whenever $n \neq m$ for all $n, m \in\N$. Then we have
\begin{eqnarray*}
&& \mu \biggl[\bigcap_{k \in\N} \bigcup
_{n\geq k} \bigl\{\omega\dvtx  \exists t \in[0,T]\mbox{ s.t. }
\omega(s) = x_n\mbox{ for all } s \in[t, t + \varepsilon )\bigr\}
\biggr]
\\
&&\qquad = \lim_{k \uparrow\infty}\mu \biggl[\bigcup
_{n\geq
k} \bigl\{\omega\dvtx  \exists t \in[0,T] \mbox{ s.t. }
\omega(s) = x_n \mbox { for all } s \in[t, t + \varepsilon )\bigr
\} \biggr] > \delta.
\end{eqnarray*}
However, for each $\omega\in\Omega$ there are at most $\lfloor(T +
\varepsilon) / \varepsilon\rfloor$ values of $x \in E$ for which
there exists $t \in[0,T]$ such that $\omega(s) = x$ for all $s \in
[t, t+\varepsilon)$, a contradiction. Here, $\lfloor\cdot\rfloor$
denotes the Gauss bracket.
\end{pf}

\section{Supermartingales as Fatou limits of~martingales}\label{sec11} \label{AFatou}

Similar techniques as developed in Section~\ref{SSapprox} allow us to
show that each nonnegative $P$-supermartingale is the Fatou limit of a
sequence of uniformly integrable martingales, provided that the
probability space supports a Brownian motion. Toward this end, recall
the definition of Fatou convergence.

%
\begin{defn}
A sequence of processes $(G^{(n)})_{n \in\N}$ with $G^{(n)} =
(G^{(n)}_t)_{t \ge0}$ \emph{Fatou converges} to a process
$G = (G_t)_{t \geq0}$ if there exists a dense subset $\mathcal{T}$ of
$[0,\infty)$, such that
\[
\liminf_{s \downarrow t, s \in\mathcal{T}} \Bigl(\liminf_{n
\uparrow\infty}
G^{(n)}_s \Bigr) = \limsup_{s \downarrow t, s \in
\mathcal{T}} \Bigl(
\limsup_{n \uparrow\infty} G^{(n)}_s \Bigr) =
G_t
\]
holds for all $t \ge0$ almost surely.
\end{defn}

%
\begin{teo} \label{TFatou}
Under Assumption~\ref{assB}, let the $P$-supermartingale $Z $
have Doob--Meyer decomposition $Z = M + D$. Then there exists a sequence
of uniformly integrable nonnegative martingales $(Z^{(m)})_{m \in\N}$
with $Z^{(m)} = \break (Z_t^{(m)})_{t \geq0}$ such that there exists a dense
subset $\mathcal{T}$ of $[0,\infty)$, whose complement is a Lebesgue
null set, such that $\lim_{m \uparrow\infty} Z^{(m)}_t = M_t +
D_{t-}$ for all $t \in\mathcal{T}$ almost surely. In particular,
$(Z^{(m)})_{m \in\N}$ Fatou converges to the $P$-supermartingale $Z$.
\end{teo}

\begin{pf}
Let us start by approximating the left-continuous process $D_-$ by a
sequence $(N^{(m)})_{m \in\N}$ of local martingales with $N^{(m)} =
(N^{(m)}_t)_{t \geq0}$, similarly to Lemmas~\ref{lapproximatedecreasingbysimpleprocesses} and \ref
{lapproximatesimplebylocalmartingales}. Toward this end, fix $m \in\N
$, set $N^{(m)}_0 = D_0$, and keep $N^{(m)}$ constant up to time
$2^{-m}$. Initiate then a ``mass loss phase'' such that $N^{(m)}$
fluctuates on $(2^{-m}, 2^{-m} + 2^{-3m})$, until it reaches
$D_{2^{-m}}$ at time $2^{-m} + 2^{-3m}$.
Now, $N^{(m)}$ stays constant on $[2^{-m} + 2^{-3m}, 2\times2^{-m}]$,
until at time $2 \times2^{-m}$ we initiate the next mass loss phase,
so that $N^{(m)}$ fluctuates again on an interval of length $2^{-3m}$,
until it reaches $D_{2 \times2^{-m}}$. For $k=3,\ldots, m2^{m}$ we
continue by having mass loss phases on the interval $(k 2^{-m}, k
2^{-m} + 2^{-3m})$, at the end of which we reach $D_{m2^{-m}}$. From
time $m + 2^{-3m}$ on, the process $N^{(m)}$ stays constant.

Next, set
%
\begin{equation}
\label{eqFatouS} \mathcal{S} = \bigcap_{n=1}^\infty
\bigcup_{m=n}^\infty\bigcup
_{k=0}^{m 2^m} \bigl(k 2^{-m}, k
2^{-m} + 2^{-3m}\bigr)
\end{equation}
and note that $\mathcal{T} = [0,\infty) \setminus\mathcal{S}$ is a
dense subset of $[0,\infty)$ since the set $\mathcal{S}$ has Lebesgue
measure zero. The set $\mathcal{S} $ can be interpreted as the set of
all points that lie in infinitely many ``mass loss intervals.'' Now,
fix $t \in\mathcal{T}$ and note that $(N_t^{(m)})_{m \in\N}$ is
eventually a nonincreasing sequence with $\lim_{m \uparrow\infty}
N^{(m)}_t = D_{t-}$.
Moreover, observe that $N^{(m)}$ almost surely attains a finite maximal
absolute value, and therefore there exists a constant $K_m>0$ such that
for $\rho_{m} = \inf\{t \ge0\dvtx  |N^{(m)}_t| \ge K_m\}$ we have $P[\rho
_{m} < \infty] < 2^{-m}$ for each $m \in\N$.

Next, define the uniformly integrable nonnegative martingales
\[
Z^{(m)} = M^{\tau^M_m \wedge\rho_m} + \bigl(N^{(m)}
\bigr)^{\tau^M_m \wedge
\rho_m},
\]
for all $m \in\N$, where $(\tau^M_m)_{m \in\N}$ is a localizing
sequence for the $P$-local martingale~$M$. An application of the
Borel--Cantelli lemma then yields that $\lim_{m \uparrow\infty}
Z^{(m)}_t = M_t + D_{t-}$ for all $t \in\mathcal{T}$ almost surely.
The Fatou convergence follows directly from the right-continuity of the
$P$-supermartingale $Z$.
\end{pf}

%
\begin{rmk}
In the proof of Theorem~\ref{TFatou}, we used the fact that $\mathcal
{S}$, the set of all points that lie in infinitely many ``mass loss
intervals'' given in (\ref{eqFatouS}), has Lebesgue measure zero. One
might suspect that the set $\mathcal{S}$ is countable or even empty.
However, this is not true. There exists a suitable strictly increasing
sequence $(m_n)_{n \in\N}$ with $m_n \in\N$ for each $n \in\N$
such that the map $\varphi\colon\{0,1\}^\N\rightarrow[0,2)$,
$(a_n)_{n \in\N} \mapsto\sum_{n =1}^\infty a_n 2^{-m_n}$ satisfies
$\varphi((a_n)_{n \in\N}) \in\mathcal{S}$ for each sequence
$(a_n)_{n \in\N}$ that satisfies $\sum_{n=1}^\infty a_n = \infty$.
As a consequence, $\mathcal{S}$ is a Cantor-like set, in the sense
that it is an uncountable Lebesgue-null set.
\end{rmk}

\section{Finitely additive measures on~the~dyadic~filtration}\label{sec12} \label{AE}
Here, we show that on the unit interval equipped with the dyadic
filtration, no finitely additive probability measure is uniquely
determined by its values on the dyadic algebra generating the Borel
$\sigma$-algebra.

Let $\Omega= (0,1]$ be equipped with the Borel sigma field $\F= \B
(\Omega)$, let
\[
\F_n = \sigma\bigl(\bigl(k2^{-n},(k+1)2^{-n}\bigr]\dvtx  0 \le
k \le2^n - 1\bigr)
\]
for all $n \in\N$, and let $P$ be a finitely additive probability
measure on $(\Omega,\F)$. We will construct a finitely additive
measure $\widetilde{P} \neq P$ such that $\widetilde{P}$ agrees with
$P$ on the algebra $\bigcup_{n \in\N} \F_n$.

Let $A = \{x_m\dvtx  m \in\N\}$ denote a countable dense subset of
$(0,1]$, such that $P[A] < 1$. Such a set has to exist since there are
disjoint countable dense subsets of $(0,1]$, for example, $\Q\cap
(0,1]$ and $(\pi+ \Q) \cap(0,1]$.
Next, for each $k,n \in\N$ such that $ k \le2^n - 1$ we define
$y^{(n)}_k = x_{m(n,k)}$, where $m(n,k)$ is the smallest integer $m$
with $x_m \in A \cap(k 2^{-n}, (k+1)2^{-n}]$, and define the set
function $\widetilde{P}{}^{(n)}\colon\F\rightarrow[0,1]$ by
\[
\widetilde{P}{}^{(n)}[B] = \sum_{k=0}^{2^n-1}
P \bigl[\bigl(k2^{-n}, (k+1)2^{-n}\bigr] \bigr] \1_B
\bigl(y^{(n)}_k \bigr)
\]
for all $B \in\F$. By construction, we have $\widetilde
{P}{}^{(n)}\mid _{\F_n} = P\mid _{\F_n}$. Note that $\widetilde{P}{}^{(n)}$ is a
countably additive probability measure and, therefore,
\[
\widetilde{Q} = \sum_{n=0}^{\infty}
2^{-n-1} \widetilde{P}{}^{(n)}
\]
is a countably additive probability measure such that $\widetilde
{P}{}^{(n)}$ is absolutely continuous with respect to $\widetilde{Q}$
for each $n \in\N$. Thus, for each $n \in\N$, the set function
$\widetilde{P}{}^{(n)}$ can be identified with an element of the unit
ball of $(L^\infty(\widetilde{Q}))^*$ and,\vspace*{1pt} therefore, the
Banach--Alaoglu theorem implies the existence of a subsequence
$(\widetilde{P}{}^{(n_k)})_{k \in\N}$ that converges in $(L^\infty
(\widetilde{Q}))^*$ to a finitely additive probability measure
$\widetilde{P}$. By construction, $\widetilde{P}|_{\F_n} = P|_{\F
_n}$ for all $n \in\N$, and $\widetilde{P}[A] = 1$, whereas $P[A] < 1$.

For instance, we could take $P$ as the Lebesgue measure. Carath\'
eodory's extension theorem then implies the uniqueness of the extension
of $P$ from $\bigcup_{n \in\N} \F_n$ to~$\F$. However, as we just
illustrated, this extension is only unique among the sigma additive
measures, not among the larger class of finitely additive measures.

\section{Alternative proof of Lemma~\texorpdfstring{\protect\ref{1111}}{5.12}}\label{sec13}\label{Aalternative}
We here provide a proof of Lemma~\ref{1111} that is
more ``constructive'' than the one in Section~\ref{SSnonuniqueness}. It
relies on the following lemma.

\begin{lem}\label{lregularpartimpliesmasslessthanone}
Let $Q = Q^r + Q^s$ be a finitely additive probability measure, where
$Q^r$ is the regular part and $Q^s$ is the singular part. If $Q^r \neq
0$ is absolutely continuous with respect to $P$, then there exists
$\varepsilon> 0$, such that $Q[A] < 1$ for each event $A \in\F$ with
$P[A]<\varepsilon$.
\end{lem}

\begin{pf}
Assume that there exists no such $\varepsilon> 0$ as in the statement.
Then, for each $n \in\N$, there exists an event $A_n \in\F$ such
that $P[A_n] \le1/n$ but $Q[A_n] = 1$. The dominated convergence
theorem then implies that
\[
1 = \lim_{n \uparrow\infty} Q[A_n] = \lim_{n \uparrow\infty}
\biggl( \E_P \biggl[\frac{\dd Q^r}{\dd P} \1_{A_n} \biggr] +
Q^s[A_n] \biggr)= \lim_{n\uparrow\infty}
Q^s[A_n],
\]
so that $Q^s[\Omega] = 1 = Q[\Omega]$, a contradiction to $Q^r \neq0$.
\end{pf}

\begin{pf*}{Proof of Lemma~\ref{1111}}
Define the nonnegative martingale $Z^{(1)} = (Z^{(1)}_t)_{t
\geq0}$ as the right-continuous modification of the uniformly
integrable $P$-martingale $(\E_{P} [Z_\infty|\F_t])_{t \geq0}$ and
the $P$-potential $Z^{(2)} = (Z^{(2)}_t)_{t \geq0}$ by $Z^{(2)} = Z- Z^{(1)}$.
There exist a F\"ollmer finitely additive measure $Q^{(2)}$ for the
$P$-potential $Z^{(2)}/\E_P[Z^{(2)}_0]$ (assuming that $\E
_P[Z^{(2)}_0]>0$, otherwise set the measure to zero) and a F\"ollmer
countably additive measure $Q^{(1)}$ for the uniformly integrable
$P$-martingale
$Z^{(1)}/\E_P[Z^{(1)}_0]$, yielding a F\"ollmer finitely additive
measure $Q =\E_P[Z^{(1)}_0] Q^{(1)} + \E_P[Z^{(2)}_0] Q^{(2)}$ for
the $P$-supermartingale $Z$. Note that $Q^r>0$ since $Q^r(\dd\omega)
= Z_\infty(\omega) P(\dd\omega) \neq0$.

Next, for all $n \in\N$ and $\varepsilon>0$,
choose $k_{n, \varepsilon} \in\N$, such that $P[A_{n,\varepsilon}]
\le\varepsilon2^{-n}$, where
\[
A_{n,\varepsilon} =\bigl\{ |W_t - W_n| < 1 \mbox{ for
all } t \in[n, k_{n, \varepsilon}]\bigr\}.
\]
Set $A_\varepsilon= \bigcup_{n \in\N} A_{n,\varepsilon} $ and note
that $P[A_\varepsilon] \le\varepsilon$. By Lemma~\ref{lregularpartimpliesmasslessthanone} we have $Q[A_\varepsilon] < 1$
for some $\varepsilon> 0$. Fix such an $\varepsilon$ and recall
Proposition~\ref{PapproxofZ}. We now replace the nonnegative
$P$-martingale $L^{(i,j,k,m,n)}$ in (\ref{eq59convergence})
by the process $\widehat{L}{}^{(i,j,k,m,n)} = (\widehat
{L}{}^{(i,j,k,m,n)}_t)_{t \geq0}$, given by
\[
\widehat{L}{}^{(i,j,k,m,n)}_t = {L}^{(i,j,k,m,n)}_{t \wedge n} \E
_P \biggl[ \frac{\1_{A_{n,\varepsilon}}}{\E_P [\1_{A_{n,\varepsilon}}|\F
_n ]}\bigg| \F_t \biggr]
\]
for all $t \geq0$, for each $i,j,k,m,n \in\N$. Note that (\ref
{eq59convergence}) holds after this replacement and the same argument
as in the existence proof of Theorem~\ref{Tapprox} yields a F\"ollmer
finitely additive measure $\widehat{Q}$, but now we have $\widehat
{Q}[A_\varepsilon] = 1$, which completes the proof.
\end{pf*}
This proof illustrates that it is always possible to ``destroy'' some
possibly remaining regular part of a F\"ollmer finitely additive
measure at infinity.
\end{appendix}

\section*{Acknowledgements}
We are grateful to Sara Biagini, Zhenyu Cui, Ioannis Karatzas, Kostas
Kardaras, Martin Larsson, Irina Penner, Walter Schachermayer, Bill
Sudderth and Mikhail Urusov for helpful discussions on the subject
matter. We thank the anonymous referee for their insightful and helpful
comments and for providing us with the references for \citet
{Moy1953} and \citet{Azema1976}.



%

\printaddresses
\end{document}